\newcommand{\ignore}[1]{\relax}
\newcommand{\im}{\operatorname{Im}}
\newcommand{\C}{\mathbb C}
\newcommand{\R}{\mathbb R}
\newcommand{\Z}{\mathbb Z}
\newcommand{\SSS}{\mathcal S}
\newcommand{\re}{\operatorname{Re}}
\newtheorem{lem}{Lemma}[section]
\newtheorem{claim}{Claim}[section]
\newtheorem{theorem}{Theorem}
\newtheorem{corollary}[lem]{Corollary}
\newtheorem{thm}[lem]{Theorem}
\newtheorem{coro}[lem]{Corollary}
\newtheorem{prop}[lem]{Proposition}
\theoremstyle{definition}
\newtheorem{condition}[claim]{Condition}
\newtheorem{defn}[lem]{Definition}
\newtheorem{problem}[claim]{Problem}
\newtheorem{exa}[lem]{Example}
\newtheorem{que}[lem]{Question}
\theoremstyle{remark}
\newtheorem{rmk}[lem]{Remark}
\newtheorem{rem}[lem]{Remark}
\renewcommand{\setminus}{\smallsetminus}
\newcommand{\tordva}{(\C^\times)^{2}}
\newcommand{\rtordva}{(\R^\times)^{2}}
\newcommand{\conj}{\operatorname{conj}}
\newcommand{\dd}{\partial}
\newcommand{\rp}{{\mathbb R}{\mathbb P}}
\newcommand{\pp}{{\mathbb P}}
\renewcommand{\setminus}{\smallsetminus}
\newcommand{\MM}{{\mathcal M}}
\newcommand{\ar}{\operatorname{Area}}
\newtheorem{theorem}{Theorem}
\newtheorem{condition}[theorem]{Condition}
\newtheorem{corollary}[theorem]{Corollary}
\newenvironment{proof}[1][Proof]{\noindent\textbf{#1.} }{\ \rule{0.5em}{0.5em}}
\renewcommand{\setminus}{\smallsetminus}
\begin{document}
%\title{Half-dimensional amoebas.}
\title
%[Quantum indices of real plane curves]
%{Canonical embedding of a separating $(M-2)$-curve is hyperbolic}
{Area in real K3-surfaces}
\author{Ilia Itenberg and Grigory Mikhalkin}
\address{
Institut de Math\'{e}matiques de Jussieu--Paris Rive Gauche\\
Sorbonne Universit\'{e}\\ 
4 place Jussieu, 75252 Paris Cedex 5, France} 
\email{ilia.itenberg@imj-prg.fr}
\address{Universit\'e de Gen\`eve,  Math\'ematiques, Battelle Villa, 1227 Carouge, Suisse}
\email{grigory.mikhalkin@unige.ch} 
\thanks{Research is supported in part by 
the SNSF-grants 
178828, 182111  and NCCR SwissMAP (G.M.),
and by the ANR grant ANR-18-CE40-0009 ENUMGEOM (I.I.)}  

\begin{abstract}
For a real K3-surface $X$, one can introduce areas of connected components
of the real point set $\R X$ of $X$ using a holomorphic symplectic form of $X$.
These areas are defined up to simultaneous multiplication by a
positive real number, 
so the areas of different components can be compared.
In particular, it turns out that the area of a non-spherical component of  $\R X$
is always greater than the area of any spherical component.

In this paper we explore further comparative restrictions on the area for real K3-surfaces
admitting a suitable polarization of degree $2g - 2$
(where $g$ is a positive integer) and such that $\R X$ has one non-spherical component
and at least $g$ spherical components.
For this purpose we introduce and study the notion of simple Harnack curves
in real K3-surfaces, generalizing planar simple Harnack curves from \cite{Mi00}. 
%we prove that 
%the area of the non-spherical component is greater than the area of every spherical component. 
%We also prove that, for any real K3-surface with a real component of genus at least two, 
%the area of this component is greater than the area of every spherical real component. 
\end{abstract} 

\maketitle

\section{Introduction}
A K3-surface $X$ is a smooth simply connected complex surface admitting
a holomorphic symplectic form, that is, a holomorphic 2-form $\Omega$ such that
$\Omega\wedge\bar\Omega$ is a volume form.
A K3-surface $X$ is called {\em real} if it comes with an anti-holomorphic 
involution $\sigma:X\to X$. The fixed point set of $\sigma$ is denoted with $\R X$
and called the {\em real locus} of $X$. If non-empty, $\R X$ is an orientable surface. 
All K3-surfaces are diffeomorphic, but their real loci may have
different topological types,
see \cite{Nik, K3book} for details. 

There are finitely many possibilities for the topological type of $\R X$.
For example, the surface $\R X$ may be diffeomorphic to the disjoint
union of two tori. In this case, we call $X$ a {\em hyperbolic} real K3-surface. 
The two components of the real locus of a hyperbolic K3-surface are homologous.
If $X$ is not hyperbolic, then $\R X$ has at most one non-spherical component.
Denote by $a$ the number of connected components of $\R X$,
and denote by $b$ the half of the first Betti number of $\R X$.
%consists of $a$ spherical components and 
%eventually a single 
%at most one non-spherical component.
%$a+b\le11$, the latter inequality being 
As a corollary of the Smith-Thom inequality \cite{Smith, Thom}, one obtains
$a+b\le12$. 
There are further restrictions on $a$ and $b$, namely we have
%$b-a\equiv1\pmod{8}$ if $a+b=11$, 
$a-b\equiv 0 \pmod{8}$ if $a+b=12$,
and
%$b-a\equiv 0,2\pmod{8}$ if $a+b=10$, 
$a-b\equiv \pm 1 \pmod{8}$ if $a+b=11$,  
according to 
the congruences on the Euler characteristic of the real locus of maximal (in the sense of the Smith-Thom inequality)
and submaximal real algebraic surfaces (see \cite{Rokhlin, Kha, GugKra}). 
%The topological classification of real K3-surfaces, as well as their 
The deformation classification of real K3-surfaces is, essentially, due 
to Nikulin \cite{Nik}. %and Kharlamov \cite{Kha}.  

Multiplying the holomorphic 2-form $\Omega$ by a non-zero complex number
we may assume that the closed real 2-form $\alpha=\re \Omega$ is invariant with respect to
the involution $\sigma$ while $\beta=\im \Omega$ is anti-invariant.  
The form $\alpha$ is non-vanishing, and thus defines an orientation on $\R X$.
Hence, $\alpha$ may be viewed as an area form on $\R X$, well-defined up to a real multiple. 
%\mnote{to say that $d\alpha = 0$, to mention the orientation issue,
%and not to use integral notation} 
Thus, we may compare total areas of different components of $\R X$. 
If $K\subset\R X$ is a component, then we denote with $\ar(K)>0$
the absolute value of $\int\limits_K\Omega=\int\limits_K\alpha$.
For instance, if $\R X$ is hyperbolic, {\em i.e.}, consists of two components $T_1$ and $T_2$,
each diffeomorphic to the 2-torus, then
\[
\ar(T_1)=\ar(T_2)
\]
since in this case the components $T_1$ and $T_2$ are homologous.

Suppose that there exists a smooth real curve $C\subset X$, that is, a smooth curve 
%of genus $g$
invariant with respect to the involution $\sigma$.
All (not necessarily smooth) real curves in $X$ linearly equivalent to $C$ 
form a linear system. By the adjunction formula,
%the projective rank of this system is $g-1$, i.e.
all such curves 
%form 
constitute the real projective space $\rp^{g}$, 
where $g$ is the genus of $C$. %(assuming that $g>0$).
%\mnote{It seems that `(assuming that $g>0$)'
%can be removed; Slava suggests not to speak about polarization if $g = 1$} 
Such linear system is called {\em polarization} if $g>0$ 
(we extend the standard terminology to the case $g=1$).
Accordingly, we say that the real K3-surface $X$ is genus $g$ {\em polarized}
%if it admits an embedding of a real smooth curve $C\subset X$ of genus $g$
%invariant with respect to the involution $\sigma$
if such a linear system (also called {\em polarization}) is fixed.
The square of the homology class $[C]\in H_2(X)$ is equal to $2g-2$ by
the adjunction formula, so we also say that such a polarization is {\em of degree $2g-2$}. 

The real locus $\R C=C\cap\R X$ is a smooth 1-dimensional manifold and therefore
diffeomorphic to the disjoint union of $l$ circles.
By the Harnack inequality \cite{Har}, we have $l\le g+1$.
Clearly, the homology class $[\R C]\in H_1(\R X;\Z_2)$
does not depend on the choice of the curve $C$ in the polarization.
We say that the polarization is {\em non-contractible} if $[\R C]\neq 0$.
In such case, $\R X$ must contain a non-spherical component, 
which we denote with $N\subset\R X$. %Let $b$ be the genus of $N$.
Unless $X$ is hyperbolic, 
all other components of $\R X$ are spheres. We denote them
with $S_j$, $j=1, \dots, a - 1$. 

\ignore{
A generic projective K3-surface $X$ has the Picard number equal to one.
In this case, the homology class of any curve in $X$ is proportional to $[C]\in H_2(X)$. 
In particular, 
%in this case the 
$X$ does not contain any embedded rational curves. 
Indeed, by the adjunction formula, the square of 
such a curve is $-2$
while $[C].[C]=2g-2\ge 0$, which contradicts to the proportionality.
We refer to the embedded rational curves as {\em $(-2)$-curves}.

Some special genus $g$ polarized K3-surfaces may admit $(-2)$-curves while
some of those $(-2)$-curves $E$ might be real. 
%i.e. invariant with respect to $\sigma$.
%As usual, denote $\R E=E\cap\R X$. Topologically, $\R E$ is a circle unless $\R E=\emptyset$. 
Denote with $R \subset H_1(\R X;\Z_2)$ the subgroup generated by real loci of all real
$(-2)$-curves.
We say that the polarization is {\em strongly non-contractible} if $[\R C] \notin R$, 
{\em i.e.}, $[\R C]$ cannot be presented as a sum of real loci of real $(-2)$-curves in $X$. 
%Clearly, any noncontractible polarization is strongly noncontractible if $X$ does not
%contain $(-2)$-curves.
%The polarization is {\em real} if we may choose $C\subset X$ to be invariant
%with respect to  
}

%The curve $C$ may be treated as a divisor in $X$. All curves  
The principal result of this paper is the following theorem.
\begin{theorem}\label{main}
Suppose that $X$ is a real K3-surface admitting a non-contractible 
genus $g>0$ polarization %with a real component $N$ of genus $b$,
and such that $\R X$ has $a - 1\ge g$ spherical components.
Then, we have 
\[
\ar(N)>\sum\limits_{j=1}^{a-g}\ar(S_j),
\]
where $N$ is the non-spherical component of $\R X$ and
$S_j$, $j=1,\dots a - 1$, are its spherical components.
\end{theorem}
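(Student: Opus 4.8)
The plan is to construct, from the polarization data, a family of real curves whose real loci interact with the spherical components in a controlled way, and then to integrate the holomorphic form $\Omega$ over well-chosen 2-chains bounded by these real loci. The key idea is that $\R C$ is non-contractible in $\R X$, so it must meet the non-spherical component $N$; moreover in a generic member of the polarization $\R C$ consists of $l\le g+1$ ovals, and the assumption $a-1\ge g$ forces many ovals to lie on the spheres $S_j$ (an oval on a sphere bounds a disk there). The strategy is to deform inside the $\rp^g$ of real curves so as to shrink those ``spherical ovals'' to points, realizing what we should call a \emph{simple Harnack curve} in $X$ (the notion announced in the abstract, generalizing \cite{Mi00}): a real curve whose ovals are arranged so that exactly $g-(a-1)+ (\text{something})$ of them survive on $N$ while each $S_j$, $j=1,\dots,a-g$, carries one oval that has degenerated. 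Following the planar Harnack picture, passing through such a degeneration changes the topology, and crucially the \emph{amoeba/area} behaves monotonically along the degeneration path.

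\textbf{Step 1.} Set up the area functional. Fix the normalization $\alpha=\re\Omega$, $\beta=\im\Omega$ with $\sigma^*\alpha=\alpha$, $\sigma^*\beta=-\beta$. For any oriented 2-chain $\Gamma$ in $X$ with $\partial\Gamma\subset \R X$ one has $\int_\Gamma\beta\in\R$ controlled by the symplectic/intersection pairing, while $\int_\Gamma\alpha$ computes a signed area; the heart of the matter is a \emph{positivity} statement: if $\Gamma$ is a holomorphic (or suitably pseudo-holomorphic) disk, then $\int_\Gamma\alpha$ has a definite sign and its absolute value equals the area of the region it sweeps. This is the mechanism already used to prove the cited fact that a non-spherical component has area exceeding any spherical one; here we need the \emph{additive} refinement over $a-g$ spheres simultaneously.

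\textbf{Step 2.} Produce the degeneration. Using that the polarization has genus $g$ and $\R X$ has at least $g$ spheres, choose a real curve $C_0$ in the linear system whose real locus contains one oval $o_j\subset S_j$ for $j=1,\dots,a-g$ together with the ``essential'' part on $N$. Then build a real one-parameter family $C_t$, $t\in[0,1]$, inside the $\rp^g$, contracting each $o_j$ to a real point $p_j\in S_j$ at $t=0$ and keeping $C_t$ nodal/immersed for $t>0$ — this is exactly a simple Harnack curve in $X$. The existence of such a family is where the combinatorics of real K3 deformation theory (Nikulin's classification, \cite{Nik}, and the bound $l\le g+1$) must be invoked: one needs the spheres to be ``independent'' enough in $H_1(\R X;\Z_2)$ that the ovals can be prescribed, and one needs the genus count to allow exactly this distribution of ovals.

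\textbf{Step 3.} Integrate and compare. For each $j$, the vanishing oval $o_j$ bounds a disk $D_j\subset S_j$, and as $t\to 0$ the curve $C_t$ sweeps out, on the $N$-side, a 2-chain $\Gamma_j$ whose boundary picks up the class $[o_j]$; by the positivity of Step 1, $\ar(\text{region of }N\text{ swept})=\sum_j \ar(D_j)=\sum_j \ar(S_j\text{-disk})$, and since these disks are disjoint subsets of the $S_j$ with $\ar(D_j)\le\ar(S_j)$ — in fact one arranges $o_j$ to be the equator so that one hemisphere has area $\ge\tfrac12\ar(S_j)$, and then a second application using the other hemisphere, or directly taking $D_j$ close to all of $S_j$ — the swept region has area at least $\sum_{j=1}^{a-g}\ar(S_j)$. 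On the other hand that swept region is a \emph{proper} subsurface of $N$ (properness is where non-contractibility of the polarization enters: the surviving ovals on $N$ are genuinely non-trivial, so $C_t$ cannot sweep all of $N$), whence $\ar(N)>\ar(\text{swept region})\ge\sum_{j=1}^{a-g}\ar(S_j)$, which is the claim.

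\textbf{Main obstacle.} I expect the hard part to be Step 2 combined with the properness in Step 3: one must guarantee that a real curve in the polarization exists with ovals distributed as required (a constrained existence question in the moduli of polarized real K3s, needing the full force of Nikulin's deformation classification and possibly a transversality/genericity argument for the family $C_t$), and one must control the limiting 2-chain carefully enough — through the nodal degenerations — to identify $\lim_{t\to 0}\int_{\Gamma_j}\alpha$ with $\ar(D_j)$ exactly, with no leakage, and to see that the total swept region omits a set of positive $\alpha$-area in $N$. Making the ``simple Harnack curve in $X$'' definition precise, and showing its area-monotonicity along the degeneration, is the technical core that the rest of the paper presumably develops.
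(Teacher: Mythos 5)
Your overall strategy---deforming real curves inside the linear system and using the vanishing of $\Omega$ on holomorphic curves to transfer area from the spheres to $N$---is indeed the paper's mechanism, and you correctly identify the ``simple Harnack'' condition and properness as the crux. But two concrete gaps remain that your plan, as stated, cannot close. First, the simultaneous degeneration of your Step 2 is not even available in general: a curve in the polarization has at most $g+1$ real ovals (Harnack), one of which must lie on $N$ because the polarization is non-contractible, so at most $g$ ovals sit on spheres, while your plan requires one oval on each of the $a-g$ spheres $S_1,\dots,S_{a-g}$; whenever $a>2g$ (already $g=1$, $a=3$) this is impossible. The paper instead treats one sphere at a time: it passes a curve through one point on each of $g$ spheres (a $g$-dimensional system allows this, so no appeal to Nikulin's classification is needed), \emph{freezes} the ovals on $g-1$ ``anchor'' spheres at the position where they bisect the area of their sphere, and lets the remaining oval sweep its sphere $\Sigma_g$ entirely, between two solitary-node degenerations $C_-$ and $C_+$ on opposite sides. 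This also repairs your second inaccuracy: contracting an oval $o_j$ to a point only transfers the area of one hemisphere $D_j$, with $\ar(D_j)\le\ar(S_j)$; the full amount $\ar(S_j)$ is obtained only by sweeping from one degenerate endpoint to the other, not by ``taking $D_j$ close to all of $S_j$,'' which is not an option once the initial curve is fixed.

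Second, and more importantly, you give no argument for why the swept region of $N$ is embedded (rather than overlapping itself) and why the regions attached to different spheres are pairwise disjoint---without this the areas cannot be added and compared to $\ar(N)$. The paper's key device here is a B\'ezout count: two distinct curves $D'$, $D''$ of the anchored family have ovals bisecting each anchor sphere $\Sigma_1,\dots,\Sigma_{g-1}$, hence already meet in at least $2(g-1)=[C]^2$ points there, so $D'\cap D''\cap(N\cup\Sigma_g)=\emptyset$; the same count shows that the $a-g$ subsurfaces of $N$ obtained by letting each non-anchor sphere play the role of $\Sigma_g$ are proper and pairwise disjoint. This anchoring-plus-B\'ezout argument is the missing idea. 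The simple-Harnack machinery enters exactly where you guessed, namely to control the degenerations along the family---no real singularities off $N$, solitary nodes at the endpoints on $\Sigma_g$ (Propositions \ref{hK3}, \ref{non-m-smooth}, \ref{inDelta})---and the case $g=1$, where the anchoring is vacuous, is handled separately via the real pencil $\R X\to\rp^1$.
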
 

\ignore{
The standard description and facts concerning period spaces of polarized $K3$-surfaces (see, for example, \cite{Beauville})
imply that for any real K3-surface $X$ with a non-contractible 
genus $g>0$ polarization $|C|$, the pair $(X, |C|)$ can be approximated by a continuous family 
($X_s, |C_s|$), $s \in [0, 1]$, where, for every $s$, the pair $(X_s, |C_s|)$ 
is formed by a real K3-surface $X_s$ and a non-contractible 
genus $g>0$ polarization $|C_s|$ of $X_s$, the pair $(X_0, |C_0|)$ coincides with $(X, |C|)$, 
and the surface $X_s$ does not contain $(-2)$-curves for every $s \in (0, 1]$. 
The following corollary is an immediate consequence of Theorem \ref{main}
and the above mentioned fact.  

\begin{corollary}\label{main-corollary}
Suppose that $X$ is a real K3-surface admitting a non-contractible 
genus $g>0$ polarization and such that $\R X$ has at least $g$ spherical components.
Then, we have 
\[
\ar(N) \geq \ar(S),
\]
where $N$ is the non-spherical component of $\R X$ and
$S$ is any of its spherical components.
\end{corollary}
}

\section{Area inequalities from 
%arithmetics}
linear algebra} 
%In the case if $N$ is not a torus then the following strengthened version
%of Theorem 1 follow from simple arithmetical considerations in $H_2(X)$. 
%The holomorphic 2-form $\Omega=\alpha+i\beta$ can be used to compute
%the volume of $X$, we set
%\[
%\vol(X)=\frac14\int\limits_X\Omega\wedge\bar\Omega>0.
%\]
Consider a real K3-surface $X$ (in this section, we do not assume that $X$ is algebraic). 
Denote with $[A]\in H_2(X;\R)$ the homology class dual 
to the real 2-form $\alpha$.
By the Hodge-Riemann relations we have
\[ [A].[A]>0.\]

%\begin{prop}\label{positive} 
%If $X$ is a real K3-surface, $N\subset\R X$ is a component of genus $b>1$,
%and $S\subset\R X$ is a spherical component, 
%then 
%\[
%\ar^2(N)\ge (b-1)(\ar^2(S)+2[A].[A]).
%\]
%\end{prop}

\begin{prop}\label{positive} 
If 
%$X$ is a real K3-surface (not necessarily algebraic), 
$N\subset\R X$ is a component of genus $b>1$,
and $\Sigma_1, \ldots, \Sigma_k \subset\R X$ are the spherical components,
then 
\[
\ar^2(N) \ge (b-1)\left(\sum_{i = 1}^k \ar^2(\Sigma_i)+2[A].[A]\right).  
\]
\end{prop}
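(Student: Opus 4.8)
The plan is to work in $H_2(X;\mathbb R)$ equipped with its intersection form, which has signature $(3,19)$, and to extract the inequality from the Hodge--Riemann positivity $[A].[A]>0$ together with the fact that the class $[A]$ is $\sigma$-invariant. The key observation is that each component $K$ of $\mathbb R X$ carries a fundamental class, but in the non-orientable-in-$X$ sense one should rather use the following: since $\alpha=\re\Omega$ restricts to an area form on each component, the number $\ar(K)=|\int_K\alpha|$ equals $|[A].[K]|$ in an appropriate pairing, where $[K]\in H_2(X;\mathbb R)$ is obtained from $[K]\in H_2(\mathbb R X)$ by the inclusion (a component of $\mathbb R X$ is a closed surface, hence carries a $\mathbb Z_2$-fundamental class, but to land in $H_2(X;\mathbb R)$ one uses that $\mathbb R X$ is oriented by $\alpha$ so that each component is canonically oriented). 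So I first set up the linear-algebra framework: let $V=H_2(X;\mathbb R)$ with intersection form $\langle\,,\rangle$ of signature $(3,19)$, let $h=[A]$ with $\langle h,h\rangle=2[A].[A]=:2\delta>0$, and let $n=[N]$, $s_i=[\Sigma_i]$ be the classes of the components, so $\ar(N)=|\langle h,n\rangle|$ and $\ar(\Sigma_i)=|\langle h,s_i\rangle|$.

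**The main computation** rests on computing the self-intersections and mutual intersections of these component classes. A sphere $\Sigma_i$ embedded in $X$ with trivial normal bundle restricted to... — more precisely, the components of $\mathbb R X$ are Lagrangian with respect to $\re\Omega$ in a suitable sense, but the relevant fact is topological: distinct components of $\mathbb R X$ are disjoint, so $\langle s_i,s_j\rangle=0$ for $i\neq j$ and $\langle n,s_i\rangle=0$; and by the adjunction formula applied to a smooth real surface (a component $K$ of genus $g_K$ in $\mathbb R X$, viewed in $X$, is a fixed locus of an anti-holomorphic involution, hence Lagrangian for $\omega=\im\Omega$... ) one gets $\langle K,K\rangle = -\chi(K)=2g_K-2$. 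Thus $\langle n,n\rangle = 2b-2>0$ since $b>1$, $\langle s_i,s_i\rangle=-2$, and all cross terms vanish. I would double-check this adjunction-type formula for the real locus — this is the step I expect to be the main obstacle, since it requires care about orientations and about why the self-intersection of a component of the real locus equals minus its Euler characteristic (it follows from the fact that $\sigma$ reverses orientation of the normal bundle, so the normal bundle of $K$ in $X$ is isomorphic to $TK$, whence the normal Euler number is $\chi(K)$ and $\langle K,K\rangle=-\chi(K)$... wait, sign: the self-intersection equals the normal Euler number $=\chi(K)$ if the normal bundle $\cong TK$, giving $\langle n,n\rangle=\chi(N)=2-2b<0$; I must get this sign right, and the Hodge–Riemann inequality below forces the convention).

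**With the Gram data in hand**, the inequality is pure linear algebra. Consider the subspace $W=\mathrm{span}(h,n,s_1,\dots,s_k)\subset V$. On the span of $n$ the form is positive definite (self-intersection $2b-2>0$), so the orthogonal complement $n^\perp$ inside $W$ has signature with at most ... Actually the cleanest route: project $h$ onto the line $\mathbb R n$ and onto each $\mathbb R s_i$. Write $h = \lambda n + \sum_i \mu_i s_i + h'$ with $h'$ orthogonal to all of $n,s_1,\dots,s_k$ (possible since those are mutually orthogonal and non-isotropic). Then $\lambda = \langle h,n\rangle/(2b-2)$, $\mu_i = \langle h,s_i\rangle/(-2)$, and
\[
2\delta \;=\; \langle h,h\rangle \;=\; \lambda^2(2b-2) \;-\; 2\sum_i\mu_i^2 \;+\; \langle h',h'\rangle.
\]
Now here is where I would use that $h$ is (essentially) a Kähler-type class: the class $[A]$, being the real part of a holomorphic form, satisfies that its orthogonal complement in the $\sigma$-anti-invariant part... hmm. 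The key additional input needed is $\langle h',h'\rangle \le 0$, equivalently that $h$ lies in the closure of the positive cone and $n$ is "spacelike enough" — more precisely, since $h$ spans together with two other Hodge classes a maximal positive-definite $3$-space, and $n$ is one positive direction, at most two more positive directions remain, forcing $h'$ (orthogonal to $n$) to have $\langle h',h'\rangle$ controlled; but in fact here one wants the simpler statement that on the $\sigma$-invariant part $H_2(X;\mathbb R)^+$ the form has signature $(1,\ast)$ and $h$ together with $n$ both lie there, so $\langle h,h\rangle\langle n,n\rangle \le \langle h,n\rangle^2 + (\text{correction from }s_i)$. Rearranging my displayed identity and dropping the (hoped-for nonpositive) term $\langle h',h'\rangle$ gives
\[
\ar^2(N) \;=\; \langle h,n\rangle^2 \;=\; (2b-2)\left(\lambda^2(2b-2)\right) \;\ge\; (2b-2)\left(2\delta + 2\sum_i\mu_i^2\right) \;=\; (b-1)\Bigl(2[A].[A] + \sum_i\ar^2(\Sigma_i)\Bigr),
\]
using $4\mu_i^2 = \ar^2(\Sigma_i)$ so $2\sum_i \mu_i^2 = \tfrac12\sum_i\ar^2(\Sigma_i)$ — I will need to recheck this factor of $2$ against the stated proposition, adjusting the normalization of $[A].[A]$ (the excerpt's $[A].[A]$ may already absorb a factor). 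The genuine content, and the step I will treat most carefully, is the inequality $\langle h',h'\rangle\le 0$, which is where the signature $(3,19)$ of the K3 lattice and the Hodge-theoretic position of $[A]$ enter; everything else is bookkeeping with the intersection form.
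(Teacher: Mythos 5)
Your framework is the same as the paper's: the paper also works with the span of $[A],[N],[\Sigma_1],\dots,[\Sigma_k]$ inside the $\sigma$-invariant part $H_2^+(X;\R)$, uses $[A].[K]=\ar(K)$, $[K].[K]=-\chi(K)$, disjointness of components, and the fact that the intersection form on that span has exactly one positive square (the paper packages the linear algebra as the sign of a bordered Gram determinant, you as an orthogonal projection; these are equivalent). The problem is that the one genuinely non-trivial step --- $\langle h',h'\rangle\le 0$ --- is exactly the step you leave open, and the first justification you float for it does not work. Knowing only that $H_2(X;\R)$ has signature $(3,19)$ and that $h'$ is orthogonal to the single positive vector $n$ tells you nothing: $h'$ then lives in a space of signature $(2,19)$ and could perfectly well have positive square. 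Your second suggestion is the right one, but it is asserted, not proved: you must show that $H_2^+(X;\R)$ carries exactly one positive square. The paper's argument for this is the missing ingredient: normalize $\Omega$ so that $\alpha=\re\Omega$ is $\sigma$-invariant and $\beta=\im\Omega$ is anti-invariant, and observe that a K\"ahler class can be chosen $\sigma$-anti-invariant (average $\tfrac12(\omega-\sigma^*\omega)$, which is again K\"ahler since $\sigma$ is anti-holomorphic). Then two of the three positive directions of the period 3-space lie in $H_2^-(X;\R)$, which is orthogonal to $H_2^+(X;\R)$, so $H_2^+(X;\R)$ has at most (hence, because of $[A]$, exactly) one positive square. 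Since $h,n,s_i$, and therefore $h'$, all lie in $H_2^+(X;\R)$ and $n$ already exhausts the positive square, $\langle h',h'\rangle\le 0$ follows and your computation closes.

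Two smaller points. First, the sign of $\langle n,n\rangle$ cannot be ``forced by the convention that makes the inequality work'' --- that is circular; it is a geometric fact that the fixed surface of an anti-holomorphic involution has self-intersection $-\chi(K)$ (multiplication by $i$ identifies the normal bundle with $TK$, but the resulting frame $e_1,e_2,ie_1,ie_2$ differs from the complex orientation $e_1,ie_1,e_2,ie_2$ by one transposition), so indeed $\langle n,n\rangle=2b-2>0$ and $\langle s_i,s_i\rangle=-2$, as your computation assumes. Second, the normalization $\langle h,h\rangle=2[A].[A]$ is spurious: with the correct $\langle h,h\rangle=[A].[A]$ and $2\sum_i\mu_i^2=\tfrac12\sum_i\ar^2(\Sigma_i)$, your displayed chain gives $\ar^2(N)\ge(2b-2)\bigl([A].[A]+\tfrac12\sum_i\ar^2(\Sigma_i)\bigr)=(b-1)\bigl(2[A].[A]+\sum_i\ar^2(\Sigma_i)\bigr)$, which is the stated inequality; no factor needs to be absorbed anywhere.
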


\begin{proof}
%[Proof of Proposition \ref{positive}]
We have the decomposition 
\[
H_2(X;\R)=H_2^+(X;\R)\oplus H_2^-(X;\R),
\]
where $H^+_2(X;\R)$ stands for the $\sigma$-invariant part of the vector space $H_2(X;\R)$
and $H^-_2(X;\R)$ for its anti-invariant part. 
The decomposition is orthogonal with respect to the intersection form on $H_2(X; \R)$.  
%The hyperplane section class, as well as the 
We have  $[A]\in H^+_2(X;\R)$, while 
the class dual to $\beta$ belongs to $H_2^-(X;\R)$.
%while $[A]\in H^+_2(X;\R)$. 
In addition, a K\"ahler form on $X$ can be chosen in such a way that the class
of this form belongs to $H_2^-(X;\R)$. 
Thus, the intersection form restricted to $H^+_2(X;\R)$ 
has one positive square. 

Consider the subspace $V\subset H^+_2(X;\R)$ generated by $[A]$, $[N]$, $[\Sigma_1]$, $\ldots$, $[\Sigma_k]$, 
where $N$, $\Sigma_1$, $\ldots$, $\Sigma_k$ are oriented by $\alpha$. 
The determinant of the intersection matrix 
%of $V$ in this basis is
of these vectors is 
\[
{\mathcal D} = 
\begin{vmatrix}
[A].[A] & \ar(N) & \ar(\Sigma_1) & \ar(\Sigma_2) & \ldots & \ar(\Sigma_k) \\
\ar(N) & 2(b-1) & 0 & 0 &   \ldots & 0 \\
\ar(\Sigma_1) & 0 & -2 & 0 &  \ldots & 0 \\
\ldots & \ldots & \ldots & \ldots & \ldots & \ldots \\ 
\ar(\Sigma_k) & 0 &0 &  \ldots & 0 & -2 
\end{vmatrix}, 
%=2\ar^2(N)-2(g-1)\ar^2(S)-4(g-1)[A].[A]>0.
\]
since the self-intersection of a component of $\R X$ in $X$ is minus
the Euler characteristic of the component. 
We get
\[
\displaylines{
(-1)^{k + 1}{\mathcal D} \cr
= 2^{k}\left(\ar^2(N)-(b-1)\sum_{i = 1}^k \ar^2(\Sigma_i) - 2(b-1)[A].[A]\right) \geq 0, 
} 
\]
since a diagonalization of the intersection form on $V\subset H^+_2(X;\R)$ 
%may contain no more than 
contains exactly one positive square. 
\end{proof} 

\begin{coro}\label{coro-arithm}
If 
%$X$ is a real K3-surface, 
$N\subset\R X$ is a component of genus $b>1$,
%and $S\subset\R X$ is a spherical component, 
and $\Sigma_1, \ldots, \Sigma_k \subset\R X$ are spherical components, 
where $k \leq b - 1$, 
then 
\[
\ar(N) > \sum_{i = 1}^k \ar(\Sigma_i).
\]
\end{coro}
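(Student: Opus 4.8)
The plan is to derive the Corollary directly from Proposition \ref{positive} together with the Hodge--Riemann positivity $[A].[A]>0$ and an elementary inequality. First I would invoke Proposition \ref{positive}, which gives
\[
\ar^2(N)\ \ge\ (b-1)\left(\sum_{i=1}^k\ar^2(\Sigma_i)+2[A].[A]\right).
\]
Since $[A].[A]>0$, the term $2(b-1)[A].[A]$ is strictly positive, so the inequality becomes strict after dropping it:
\[
\ar^2(N)\ >\ (b-1)\sum_{i=1}^k\ar^2(\Sigma_i).
\]

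Next I would use the hypothesis $k\le b-1$ to replace $b-1$ by $k$, obtaining $\ar^2(N)>k\sum_{i=1}^k\ar^2(\Sigma_i)$. Finally, by the Cauchy--Schwarz inequality (equivalently, comparison of the quadratic and arithmetic means of the nonnegative numbers $\ar(\Sigma_1),\dots,\ar(\Sigma_k)$),
\[
\left(\sum_{i=1}^k\ar(\Sigma_i)\right)^2\ \le\ k\sum_{i=1}^k\ar^2(\Sigma_i)\ <\ \ar^2(N).
\]
Taking square roots (all quantities are positive) yields $\ar(N)>\sum_{i=1}^k\ar(\Sigma_i)$, as claimed.

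There is no real obstacle here: the entire content is already packaged in Proposition \ref{positive}, and the only subtlety is to make sure the inequality is genuinely strict, which is guaranteed by $[A].[A]>0$ (and would in fact be false in general without the hypothesis $k\le b-1$, so that step must be used explicitly). One should also note the degenerate case $k=0$, where the right-hand sum is empty and the statement $\ar(N)>0$ is immediate; for $k\ge 1$ the argument above applies verbatim.
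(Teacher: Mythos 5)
Your proof is correct and follows essentially the same route as the paper: the paper likewise deduces the corollary immediately from Proposition \ref{positive} together with the inequality $n(x_1^2+\ldots+x_n^2)\ge(x_1+\ldots+x_n)^2$, with strictness coming from $[A].[A]>0$. Your write-up just makes the intermediate steps (dropping the positive term, using $k\le b-1$, and Cauchy--Schwarz) explicit.
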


\begin{proof}
The statement is an immediate corollary of Proposition \ref{positive} and the inequality
$$
n(x_1^2 + \ldots + x_n^2) \geq (x_1 + \ldots + x_n)^2
$$ 
valid for any integer $n \geq 1$ and any real numbers $x_1$, $\ldots$, $x_n$. 
\end{proof} 

\begin{rmk}
In the first version of our paper, the main
result was
%application of the theory of simple Harnack
%curves in K3-surfaces was
a weaker version of Theorem \ref{main},
%namely
namely, only the inequality $\ar(N)>\ar(S_1)$ under similar hypotheses. %similar to those of T 
In view of Corollary \ref{coro-arithm} this inequality is only non-trivial if the genus of $N$ is 1.
However, the referee of our paper suggested an elegant simple proof of this inequality 
based on so-called {\em Donaldson's trick} (which consists in changing the complex structure on a K3-surface so that an anti-holomorphic involution becomes holomorphic; see \cite{Don} or \cite{K3book}) and 
applicable even for non-projective real K3-surfaces with a torus component.
We are strongly indebted to the referee for this remark which has pushed us to find
a stronger version of Theorem \ref{main} considered in this paper.
This stronger version still comes as an application of simple Harnack curves in 
a K3-surface that are studied in this paper. 
\end{rmk} 

%\begin{rmk}
Referee's suggestion mentioned in the previous remark can be generalized in the following way
strengthening Corollary \ref{coro-arithm}.
%\end{rmk}

\begin{prop}
If 
%$X$ is a real K3-surface, 
$N\subset\R X$ is a component of genus $b\ge 1$,
%and $S\subset\R X$ is a spherical component, 
and $\Sigma_1, \ldots, \Sigma_k \subset\R X$ are spherical components, 
where $k \leq b $, 
then 
\[
\ar(N) > \sum_{i = 1}^k \ar(\Sigma_i).
\]
\end{prop}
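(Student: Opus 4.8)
The plan is to reduce the case $b\ge 1$ (with $k\le b$) to a situation where Proposition~\ref{positive} and Corollary~\ref{coro-arithm} apply, by applying Donaldson's trick. Recall that Donaldson's trick produces, from the real K3-surface $X$ with anti-holomorphic involution $\sigma$, a new complex structure $J'$ on the same underlying smooth $4$-manifold for which $\sigma$ becomes \emph{holomorphic}; moreover the quotient $X/\sigma$ (suitably interpreted, using that $\sigma$ is now holomorphic with fixed locus $\R X$) carries the structure of a complex surface, and $\R X$ sits inside $X$ as the fixed divisor of a holomorphic involution. The key numerical input is that the holomorphic symplectic form $\Omega$, whose real part $\alpha$ governs the areas, transforms in a controlled way: the areas $\ar(N)$ and $\ar(\Sigma_i)$ of the real components, defined via $\int_K\alpha$, can be matched against intersection numbers in the new complex geometry. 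In particular, each spherical component $\Sigma_i$ becomes (a component of) a rational curve in the $J'$-geometry, contributing a $-2$ self-intersection, while $N$ of genus $b$ contributes self-intersection $2b-2$, exactly as in the intersection matrix $\mathcal D$ of Proposition~\ref{positive}.

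The steps I would carry out: first, set up Donaldson's trick carefully, recording how $[A]$, $[N]$ and the $[\Sigma_i]$ are realized as homology (or Picard) classes in the twisted complex structure and verifying that the intersection pairing among them is still governed by Euler characteristics, so that the same matrix $\mathcal D$ appears. Second, observe that the genus-$1$ obstruction in Proposition~\ref{positive} --- the factor $(b-1)$, which degenerates when $b=1$ --- is circumvented here because in the $J'$-picture the relevant positivity comes not from the restriction of the K3 intersection form to a rank-$(k+2)$ subspace, but from a genuinely algebraic positivity statement: the class dual to $\alpha$ becomes (a positive multiple of) an ample or nef class on the quotient surface, or more precisely one can invoke the Hodge index theorem on the $J'$-surface together with the fact that $N$, being the unique non-spherical component, carries the entire ``hyperbolic'' part of the pairing. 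Third, feed the resulting rank-two signature constraint into the elementary inequality $n(x_1^2+\dots+x_n^2)\ge(x_1+\dots+x_n)^2$ with $n=k\le b$, exactly as in the proof of Corollary~\ref{coro-arithm}, to conclude $\ar(N)>\sum_{i=1}^k\ar(\Sigma_i)$.

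The main obstacle I anticipate is making precise \emph{why} the case $b=1$, $k=1$ goes through: in Proposition~\ref{positive} the subspace $V$ of $H_2^+(X;\R)$ has one positive square, and when $b=1$ the vector $[N]$ becomes isotropic, so the determinant computation collapses and yields only $\ar^2(N)\ge\ar^2(\Sigma_1)$ with equality not obviously excluded --- and even that requires care. Donaldson's trick resolves this because on the twisted surface one gains an \emph{additional} positive direction: the hyperk\"ahler rotation turns the K\"ahler class (which lived in $H_2^-$ and was unavailable before) into an honest ample class on the new surface, so the Hodge index theorem applies on a space where $[A]$-dual and $[N]$ span a subspace with the right signature for a \emph{strict} inequality. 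The delicate point is tracking the strictness: one must rule out the equality case, which amounts to showing that $[N]$, the $[\Sigma_i]$ and the $\alpha$-dual class cannot all lie in a common isotropic-plus-one-positive configuration, i.e. that $N$ genuinely ``dominates''. I expect this to follow from the fact that $\alpha$ is nowhere-vanishing on $\R X$ (so $\ar(N)>0$ strictly and $N$ is a nontrivial homology class) combined with the connectedness/uniqueness of $N$; but pinning down the Donaldson-trick bookkeeping so that the strict Hodge-index inequality is available --- rather than its non-strict form --- is where the real work lies.
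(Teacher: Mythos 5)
You correctly identify Donaldson's trick (the hyperk\"ahler rotation making $\sigma$ holomorphic, with $\alpha$ becoming a K\"ahler form on the rotated surface $X'$ and $\R X$ becoming a holomorphic curve) as the first step, and this matches the paper. But the second half of your plan --- feeding the rotated picture back into the signature argument of Proposition~\ref{positive} and Corollary~\ref{coro-arithm} via the Hodge index theorem --- does not work, and the obstacle you flag at the end is fatal to it. The intersection form on $H_2(X;\R)$ is a topological invariant: rotating the complex structure does not change the Gram matrix of $[A]$, $[N]$, $[\Sigma_1],\dots,[\Sigma_k]$, and the Hodge index theorem on $X'$ gives exactly the same constraint (at most one positive square on the span of these classes) that was already used in Proposition~\ref{positive}. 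Hence the determinant computation still carries the factor $(b-1)$ and still collapses when $b=1$; there is no ``additional positive direction'' to be gained, and no purely lattice-theoretic argument in this span can rule out the equality case. Indeed for $b=1$, $k=1$ the $3\times 3$ Gram determinant equals $2\ar^2(N)>0$ identically, so it yields no inequality at all.

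The idea you are missing is genuinely algebro-geometric and is where the rotation pays off: on $X'$ the curve $N$ has genus $b$ and defines a polarization whose linear system has dimension $b$, so for $k\le b$ one can choose a member $N'\in|N|$ passing through one point $p_i$ on each $\Sigma_i$. Since $[N'] = [N]$ is orthogonal to each $[\Sigma_i]$ (disjoint components of $\R X$ have zero intersection number) while $N'$ meets $\Sigma_i$ at $p_i$, each sphere $\Sigma_i$ must be an irreducible \emph{component} of $N'$. Then
\[
\ar(N)=\int\limits_{N'}\alpha=\sum_{i=1}^{k}\int\limits_{\Sigma_i}\alpha+\int\limits_{N''}\alpha>\sum_{i=1}^{k}\ar(\Sigma_i),
\]
where $N''$ is the residual part of $N'$ and strictness comes from positivity of the K\"ahler form $\alpha$ on every holomorphic curve in $X'$ (the residual part is nonempty since $[N]\ne\sum[\Sigma_i]$, e.g.\ because $[N]$ pairs positively with $[A]$ while being orthogonal to each $[\Sigma_i]$). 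This is the step that circumvents the $b=1$ degeneration, not a refined signature count; without it your argument cannot produce the sum over $k$ spheres, nor strictness.
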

\begin{proof}
By the Calabi-Yau theorem, for any K\"ahler form on $X$, there exists a unique
K\"ahler-Einstein metric whose fundamental class coincides with the chosen form. 
Thus, $X$ admits a $\sigma$-anti-invariant K\"ahler form. 
Let $\omega$ be a $\sigma$-anti-invariant K\"ahler form on $X$ such that $[\omega]^2=[\alpha]^2 = [\beta^2] > 0$. 
Consider the hyperk\"ahler rotation 
%of $X$ 
that cyclically exchange the triple $\alpha,\beta,\omega$ 
(in such a way that $\alpha$ becomes a new K\"ahler form),
and denote the resulting K3-surface with $X'$. 
Since 
%the forms $\beta$ and $\omega$ vanished on $\R X$ before the rotation, 
the new holomorphic $2$-form on $X'$ 
%vanishes on $\R X$, and thus 
is $\sigma$-anti-invariant, the involution $\sigma$ is holomorphic on $X'$
and $\R X$ 
%is 
becomes a holomorphic curve in $X'$ (for details, see \cite{Don} or \cite{K3book}). 

The component $N$ is a holomorphic curve in $X'$ 
%defining 
and defines a polarisation of genus $b$ of $X'$. 
Choose  a point $p_i \in \Sigma_i$ for every $i=1,\dots,k$. 
Since $k\le b$, there exists a holomorphic curve $N'$
from the polarisation such that $N'$ passes through $\{p_i\}_{i=1}^k$. 
For any $i = 1$, $\ldots$, $k$, 
the class $[N']=[N]\in H_2(X)$ is orthogonal
to $[\Sigma_i]$. Thus, $\Sigma_i$ is an irreducible component of $N'$.
It remains to notice that $\int\limits_{C'}\alpha > 0$ for any holomorphic curve $C' \subset X'$. 
\end{proof} 

%The rest of the paper is devoted to the proof of Theorem 1 in the case
%when $N$ is a torus. 
%To achieve this, we take a look at real curves in $X$. 

\section{Curves in $X$ and their deformations}
We take a closer look at the polarization ({\em i.e.}, the linear system $|C|\approx\pp^g$) defined 
by a smooth real curve $C \subset X$. Denote by $\MM\subset |C|$ 
the space of all
smooth curves linearly equivalent to $C$.
It is well-known that $\MM$ is a smooth manifold of dimension $g$.
The tangent space $T_C\MM$ 
%is given by 
consists of holomorphic normal vector fields to $C$ in $X$. 
The non-degenerate holomorphic 2-form $\Omega$ provides an identification between
$T_C\MM$ and the space 
%$\Omega(C)$ 
of holomorphic 1-forms on $C$ 
(through plugging into $\Omega$ the normal vector field corresponding to a vector
from $T_C\MM$).

Let $\widetilde\MM\to\MM$ be the universal covering consisting
of pairs $\tilde C' = (C', \gamma)$, 
where $C'\in\MM$ and $\gamma$ is a homotopy class of a path
connecting $C$ and $C'$ in $\MM$.
For $Z\in H_1(C)$ we
define the map $I_Z:\widetilde\MM\to\C$ by
\begin{equation}\label{mapIz}
I_Z(\tilde C')=\int\limits_{Z_\gamma}\Omega.
\end{equation}
Here $Z_\gamma$ is the surface spanned by a cycle from $Z$ under the monodromy from $\gamma$.
Since the closed 2-form $\Omega$ vanishes on any holomorphic curve (including $C$ and $C'$),
the value $I_Z(\tilde C')\in\C$ is well-defined.
%\mnote{Slava asks for additional argument} 

Let $a_1,\dots,a_g\in H_1(C)$ be a 
%basis 
maximal collection
%\mnote{`maximal collection'}  
of $a$-cycles, 
{\em i.e.}, a collection of linearly independent primitive elements 
with trivial pairwise intersection numbers.
\begin{lem}\label{intI}
The map  
\begin{equation}\label{mapI}
I=(I_{a_1},\dots,I_{a_g}):\widetilde\MM\to\C^g.
\end{equation}
is a local diffeomorphism.
\end{lem}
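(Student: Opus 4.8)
The plan is to show that the derivative of $I$ at an arbitrary point $\tilde C' = (C',\gamma)$ is an isomorphism $T_{\tilde C'}\widetilde\MM \cong T_{C'}\MM \to \C^g$; since $\widetilde\MM$ and $\C^g$ both have real dimension $2g$, this gives the local diffeomorphism claim by the inverse function theorem. Because $\widetilde\MM \to \MM$ is a covering, it suffices to compute $dI$ as a map out of $T_{C'}\MM$. Using the identification already recorded in the text, $T_{C'}\MM$ is the space of holomorphic normal vector fields to $C'$ in $X$, and plugging such a field into $\Omega$ identifies $T_{C'}\MM$ with $H^0(C', \omega_{C'})$, the space of holomorphic $1$-forms on $C'$.

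The key step is a variational (Picard--Fuchs / Reshetikhin-type) computation: for $Z \in H_1(C')$ and a tangent vector $v \in T_{C'}\MM$ corresponding (via $\Omega$) to a holomorphic $1$-form $\eta_v$ on $C'$, I expect
\[
d(I_Z)(v) = \int\limits_Z \eta_v.
\]
The idea is that $I_Z(\tilde C')$ is obtained by integrating $\Omega$ over the trace of a cycle representing $Z$ under an infinitesimal deformation; differentiating, the thickened $2$-chain degenerates to the cycle itself paired with the normal displacement, and contracting $\Omega$ with the normal vector field is exactly the operation producing $\eta_v$. This is a standard first-variation-of-period computation; one should set it up carefully using a tubular neighborhood of $C'$ and Stokes' theorem (the closedness of $\Omega$, already invoked for well-definedness of $I_Z$, is what makes the boundary terms work out), but it is routine once the bookkeeping is fixed.

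Granting this formula, $dI(v) = \bigl(\int_{a_1}\eta_v, \dots, \int_{a_g}\eta_v\bigr)$, i.e. $dI$ is the composition of the isomorphism $T_{C'}\MM \cong H^0(C',\omega_{C'})$ with the period map $\eta \mapsto \bigl(\int_{a_1}\eta,\dots,\int_{a_g}\eta\bigr)$. The latter is a $\C$-linear isomorphism $H^0(C',\omega_{C'}) \to \C^g$: this is the classical fact that a nonzero holomorphic $1$-form on a compact Riemann surface cannot have all its $a$-periods vanish (if it did, $\int \eta\wedge\bar\eta$ would be computed via the Riemann bilinear relations to be zero, forcing $\eta = 0$), together with the dimension count $\dim_\C H^0(C',\omega_{C'}) = g$. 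Hence $dI$ is an $\R$-linear isomorphism at every point, and the lemma follows. The main obstacle is the first step — making the first-variation-of-period identity $d(I_Z)(v) = \int_Z \eta_v$ precise, since it requires choosing representatives of the deformed cycles coherently and controlling the limit of the swept-out $2$-chains; everything after that is classical Hodge theory of curves.
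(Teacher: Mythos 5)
Your proposal is correct and follows essentially the same route as the paper: identify $T_{C'}\MM$ with holomorphic $1$-forms on $C'$ via $\Omega$, observe that the kernel of $dI$ consists of forms with vanishing $a$-periods, and conclude by the classical Riemann bilinear relations that such a form is zero. The first-variation identity $d(I_Z)(v)=\int_Z\eta_v$ that you carefully flag as the main technical point is exactly what the paper's (terser) proof takes for granted when it asserts that the kernel of $dI$ "consists of holomorphic forms with zero periods along $a_1,\dots,a_g$."
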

\begin{proof}
Since $I$ is a map between manifolds of the same dimension,
it suffices to prove that its differential is injective.
The kernel of $dI$ at $\tilde C'\in\widetilde\MM$
consists of holomorphic forms on $C'\in\MM$ with zero periods
along $a_1,\dots,a_g$. By the Riemann theorem, any such holomorphic 
form on $C'$ must vanish. 
\end{proof} 

\begin{rmk}
Lemma \ref{intI} is a shadow of the so-called Beauville-Mukai
integrable system (see \cite{Beauville}) on the universal Jacobian over $\widetilde\MM$. 
The maps $I_Z$ are the integrals of this system.
%\mnote{reference?} 
\end{rmk} 

The system $(a_1,\dots,a_g)$ of $a$-cycles can be represented with 
a system $a$ of $g$ pairwise disjoint simple 
%closed curves 
loops on $C$. Their complement in $C$
is a sphere with $2g$ holes.
Let $\widetilde\MM^a$ be the space
consisting of pairs $(C', [\gamma])$, such that 
$C'\in |C|$, is (at worst) a nodal curve,
and
$[\gamma]$ is a homotopy class of a path $\gamma:[0,1]\to |C|$ such that $\gamma(0)=C$, 
$\gamma(1)=C'$,
%$\gamma^{-1}(\MM)
%=
%\supset [0,1)$
%\mnote{The definition of $\MM^a$ is changed
%\endgraf It: with this definition, there is no inclusion $\widetilde\MM^a\supset\widetilde\MM$} 
and for all $t\in [0,1]$ the curve $\gamma(t)$ is at worst a nodal curve
whose vanishing cycles %of the pair $(C', \gamma)$
under the monodromy
are represented by simple 
%closed curves 
loops on $C$ 
disjoint with the family $a$.
Note that the forgetting map
\[
\widetilde\MM^a\to |C|,\ (C', [\gamma])\mapsto C'
\]
is a local diffeomorphism.
The definition of the map \eqref{mapI} 
%uniquely 
naturally extends to the map 
\begin{equation}\label{mapIa}
I^a:\widetilde\MM^a\to\C^g.
\end{equation}
Lemma \ref{intI} extends to the following proposition.
\begin{prop}\label{intIa}
The map %\eqref{mapIa} 
$I^a$ is a local diffeomorphism.
\end{prop}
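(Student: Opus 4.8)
The plan is to reduce Proposition \ref{intIa} to Lemma \ref{intI} by a dimension count and an analysis of the map $I^a$ near a nodal curve. First I would recall that the forgetting map $\widetilde\MM^a\to|C|$ is a local diffeomorphism, so it suffices to prove that $I^a$ has injective differential at every point $(C',[\gamma])$ of $\widetilde\MM^a$. Over the open dense locus where $C'$ is smooth this is exactly the content of Lemma \ref{intI} together with the observation that the periods $I_{a_1},\dots,I_{a_g}$ extend continuously (indeed holomorphically) across the nodal locus: the cycles $a_1,\dots,a_g$ are disjoint from all the vanishing cycles occurring along $\gamma$, so under the monodromy they survive as genuine $1$-cycles on the (partial) normalization of $C'$, and $\int_{Z_\gamma}\Omega$ is well defined because $\Omega$ vanishes on every (possibly nodal) curve in the family. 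So the only real work is at a point where $C'$ has one or more nodes.

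The key step is therefore the local model near a nodal curve $C'$ with $\delta$ nodes whose vanishing cycles $\delta_1,\dots,\delta_\delta$ are, by hypothesis, represented by simple loops on $C$ disjoint from the family $a$. I would use that the semiuniversal deformation of such a nodal curve inside $|C|$ is smooth of dimension $g$, with $\delta$ of the coordinates being the smoothing parameters of the nodes and the normalization $\widehat{C'}$ having arithmetic genus $g-\delta$. The tangent space $T_{C'}\widetilde\MM^a$ splits (after choosing the local model) into the deformations that keep the nodes — i.e. the tangent space to the equigeneric stratum, which via $\Omega$ is identified with holomorphic $1$-forms on $\widehat{C'}$ regular at the preimages of the nodes — and the $\delta$ smoothing directions, which via $\Omega$ correspond to differentials of the third kind with simple poles at the two preimages of each node and opposite residues. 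On this whole space, plugging the corresponding normal field into $\Omega$ gives a $g$-dimensional space of differentials on $\widehat{C'}$ (meromorphic, with at worst simple poles at the node-preimages and matching residues), which is exactly the space of sections of the dualizing sheaf $\omega_{C'}$; and the differential of $I^a$ sends such a field to its vector of periods along $a_1,\dots,a_g$.

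Thus injectivity of $dI^a$ at $C'$ becomes the statement that a section of $\omega_{C'}$ with vanishing $a$-periods must vanish. Here the hard part — and the place where the hypothesis that the vanishing loops are disjoint from $a$ is essential — is to arrange the symplectic basis so that on the normalization $\widehat{C'}$ the images of $a_1,\dots,a_g$ still contain a full set of $a$-cycles for $\widehat{C'}$ of genus $g-\delta$, with the remaining $\delta$ of them being loops around the node-preimages (small circles encircling a puncture). Vanishing of the periods around the punctures forces all residues to be zero, so the section of $\omega_{C'}$ descends to an honest holomorphic $1$-form on $\widehat{C'}$; vanishing of the remaining $g-\delta$ periods then kills it by the Riemann theorem applied to $\widehat{C'}$, exactly as in Lemma \ref{intI}. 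The main obstacle is bookkeeping: making precise the identification of $T_{C'}\widetilde\MM^a$ with $H^0(C',\omega_{C'})$ compatibly with the period map, and checking that the disjointness hypothesis on the vanishing loops indeed lets one complete $a_1,\dots,a_g$ to a basis of the described form on $\widehat{C'}$. Once that normalization is in place, the proof is the residue-plus-Riemann-theorem argument above.
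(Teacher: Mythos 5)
Your proposal follows essentially the same route as the paper: via $\Omega$, identify $T_{C'}|C|$ with the $g$-dimensional space of meromorphic forms on the normalization having at worst simple poles over the nodes with opposite residues, and then show that such a form with vanishing $a$-periods must vanish. The paper simply asserts this last step ("and thus trivial"), whereas you spell it out (puncture periods kill the residues, then Riemann's theorem on the normalization); the only caveat is that your bookkeeping (normalization of genus $g-\delta$, exactly $\delta$ of the $a_j$ becoming puncture loops) tacitly assumes $C'$ irreducible, and for reducible degenerations one must also invoke the residue theorem on components of the normalization met by no $a_j$.
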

\begin{proof}
For a nodal curve $C'\in |C|$ the holomorphic 2-form gives an identification
between $T_{C'}|C|$ and the space of meromorphic forms on the normalization
of $C'$ with at worst simple poles over the nodes such that the residues
at the two preimages of the same node are opposite.
The space of such forms is $g$-dimensional. 
The kernel of $dI^a$ over $C'$ consists of the forms with zero periods over $a_j$, $j=1,\dots,g$,
and thus trivial.
\end{proof}

%More generally, let $\widetilde\MM^a_s\supset\widetilde\MM^a\supset\widetilde\MM$ be the space 
%consisting of pairs $(C', [\gamma])$, such that $C'\in |C|$ 
%and $[\gamma]$ is a homotopy class of a path $\gamma:[0,1]\to |C|$ verifying the following properties: 
%\begin{itemize}
%\item $\gamma(0)=C$ and $\gamma(1)=C'$, 
%\item for all but finitely many values of $t \in [0, 1]$ the curve $\gamma(t)$
%belongs to $\MM$, 
%\item the system $a$ can be extended to a continuous family $a_t$, $t \in [0, 1]$,
%such that, for any $t \in [0, 1]$, the system $a_t$ is formed by $g$ simple loops 
%on the curve $\gamma(t)$, and all loops in $a(t)$ are disjoint from the singular locus of $\gamma(t)$ 
%(and, in particular, disjoint from the multiple components in the case of non-reduced curve $\gamma(t)$).  
%\end{itemize}
%
%The map \eqref{mapI} uniquely extends to
%\begin{equation}\label{mapIas} 
%I^a_s:\widetilde\MM^a_s\to\C^g.
%\end{equation} 

\section{Simple Harnack curves and their degenerations}
Simple Harnack curves in toric surfaces were introduced and studied in \cite{Mi00}.
A toric surface $Y\supset\tordva$ may be considered as a {\em log K3-surface}, or a K3-surface 
relative to its toric divisor $D=Y\setminus\tordva$. %(the complement of the dense toric orbit of $Y$).
Indeed, $D$ is the pole divisor for the meromorphic extension
of the holomorphic form $dz_1\wedge dz_2$ on $\tordva$.
%In particular, Lemma \ref{intI} admits a straightforward modification for the space
%$\tilde\MM(\Delta)$.
In this section, we define and study counterparts of these curves in (closed) K3-surfaces. 

%Recall that an irreducible (over $\C$) real curve is called an {\em M-curve} if the number of 
%the real components of its normalization equals to one plus its genus
%(i.e. the normalization has the maximal number of real components allowed
%by the Harnack inequality). 
Recall that a smooth (and irreducible over $\C$) real curve $C$ is called an {\em M-curve} 
(or a {\em maximal curve}), 
if the number of 
its real components is equal to one plus its genus
({\it i.e.}, if it has the maximal number of real components allowed 
by the Harnack inequality). 
An M-curve $C$ is {\em dividing}, {\it i.e.}, $C\setminus\R C$ consists of two components interchanged 
by the involution of complex conjugation. 

An orientation on the real locus $\R C$ of a dividing curve $C$
is called the {\em complex orientation} if it comes as the boundary
orientation of a component of $C\setminus\R C$, see \cite{Ro}.
Clearly there are two complex orientations on $\R C$ and they are opposite.
%Below by the complex orientation we mean one of these two.

\begin{defn}\label{dH}
A smooth real M-curve $C\subset X$ 
%in a closed real surface $X$
is called {\em simple Harnack}
%if it is an M-curve, 
%and
%if for any two connected components $L,L'\subset\R C$
if for any component $K\subset\R X\setminus\R C$
and any two distinct components $L,L'\subset\R C$ adjacent to $K$
%is orientable and
%we have the following property.
%{\em Either the closure of $K$ coincides with the 
%contains more than one component of $\R C$ then $\Sigma\setminus \R 
a complex orientation of $L$ and $L'$
%induced as the boundary orientation
%of a half of $C\setminus\R C$
can be extended to an orientation of $K$.
%the boundary orientation
%of $\dd K\subset\R C$ induced by an orientation of $K$ agrees
%with the boundary orientation on $\R C$
%induced by an orientation of a component of $C\setminus\R C$.
%by the complex orientation of one of the halves of $C\setminus\R C$.
\end{defn}
Note that this definition allows for two types of components %$\Sigma\subset\R X$
$K\subset \R X\setminus\R C$.
Either we have $\dd K=\dd\bar K$ for the closure $\bar K\subset\R X$, 
%(i.e. no component of $\R C$ is included to $\dd K$ more than once)
or the closure $\bar K$ is the entire connected component $\Sigma\subset\R X$.
%to contain a single non-contractible 
%({\em i.e.}, realising a non-zero class in $H_1(\R X;\Z_2)$) 
%(in $H_1(S)$) 
%(in $\Sigma$) 
%component of a simple Harnack curve. 
%However, if  $\Sigma$ contains at least two components of $\R C$, then
%all these components must be contractible. 
In the first case, the complex orientations of the components of $\R C$ %of $\Sigma\cap\R C$
must alternate as in 
Figure \ref{orient}. 
\begin{figure}[h]
\includegraphics[height=45mm]{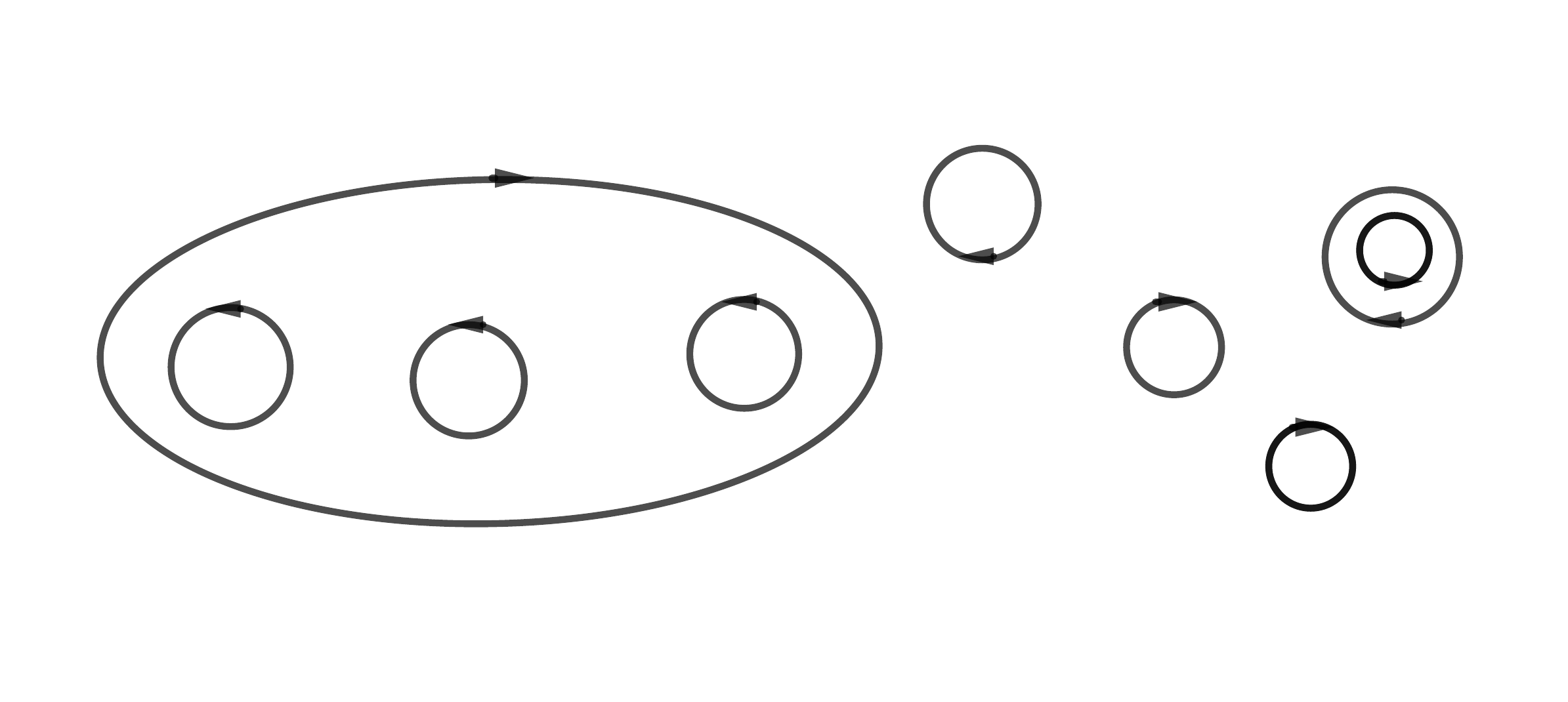}\vspace{-20pt}
\caption{Orientations imposed by Definition \ref{dH}.
\label{orient}}
\end{figure}
In the second case, the component $\Sigma\subset\R X$ contains a single component
$L\subset\R X$ and $[L]\neq 0\in H_1(\R X;\Z_2)$.
%We call the components corresponding to the second case {\em $m$-components}.
\begin{defn}
A component $L\subset\R C$ is called a {\em modifiable} component of $\R C$ or an
{\em $m$-component} if $[L]\neq 0\in H_1(\R X;\Z_2)$ and the component of $\R X$
containing $L$ does not contain any other component of $\R C$. 
%$\Sigma_j\cap\R X=L_j$
%for the component $\Sigma_j\subset\R X$ such that $\Sigma_j\cap L_j\neq\emptyset$.
%(In other words, $L_j$ is the only component of $\R C$ in $\Sigma_j$.)
\end{defn} 
Unless $X$ is a hyperbolic K3-surface, $\R X$ has not more than one non-spherical component
and thus $\R C$ may have not more than one $m$-component.

%For the purposes of this paper we need this definition for the case when $X$ is a K3-surfaces.
%Then orientability of components of $\R X\setminus\R C$ is automatic (as $\R X$ is orientable),
%but concordance of the boundary orientation is a non-trivial condition.
\begin{rem}
Simple Harnack curves in toric surfaces from \cite{Mi00} can be defined through a relative version
of Definition \ref{dH}. Namely, a real M-curve $C$ in a real toric surface $Y$ is simple Harnack
if $C\setminus \tordva=\R C\setminus \tordva$ ({\em i.e.}, all intersection points of $C$ and the toric divisor 
are real) and the orientation
of $\dd K\subset(\R C\cap\tordva)$ induced from $K\subset \rtordva\setminus\R C$
agrees with an orientation of a component of $C\setminus\R C$.
\end{rem}
\begin{rem}
It seems that Definition \ref{dH} also might be meaningful for the case when $X$ is a surface
%also 
different from a K3-surface. In a more general setting we add an assumption that 
each component of $\R X\setminus\R C$ is orientable (the condition that holds automatically
in the case of K3-surfaces thanks to the non-vanishing 2-form $\Omega$).  
\end{rem} 

We say that a curve $C_0 \subset X$ is a {\em degeneration} of simple Harnack curves
if there exists a continuous family $C_t\subset X$, $t\in [0,1]$, such that $C_t$ is a simple 
Harnack curve for every $t \in (0, 1])$. 
%and $C_0 = C$. 
Clearly, $C_0$ is a real curve which may develop some singularities. 
Also, the degeneration $C_0$ does not have to be irreducible, or
even reduced. It consists of several components while some of these components
may be taken with multiplicity greater than 1 (multiple components).
We refer to components of $C_0$ whose multiplicity equal to 1 as simple components of $C_0$. 
%Unless $C$ is non-reduced, it has isolated singularities. 

\begin{prop}\label{hK3}
Let $C_0 \subset X$ be a degeneration of simple Harnack curves.
Then, a singular point of $C_0$ either belongs to a multiple 
component, or is an ordinary double point, i.e. a node. 
%{\rm (}a Morse singularity{\rm )}.

Furthermore, if a node of $C_0$ is non-real, then it corresponds to 
a transverse intersection point of two different simple components of $C_0$.
If a node $p$ of $C_0$ is real, then $p$ is either a solitary node
{\rm (}given in local analytic coordinates by $x^2+y^2=0${\rm )}, 
or corresponds to a transverse intersection 
of two different real simple components of $C_0$. 
In the latter case, 
%for any $t \in (0, 1)$, 
%the two components of $C_0$ that intersect at $p$
%intersect only at $p$ and 
%\mnote{Why they intersect only at $p$?}
%$p$ is the unique intersection point of these two components of $C_0$;
$C_0$ is a union of two real curves intersecting only at $p$; %\mnote{It: a stronger statement}
the two real branches of 
%$C_0$ 
$\R C_0$ %\mnote{It: $\R C_0$ instead of $C_0$} 
at $p$ come from the same connected component of $\R C_t$, 
$t > 0$, under degeneration. 
\end{prop}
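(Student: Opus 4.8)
The plan is to analyze the local model of a degeneration $C_t \to C_0$ at a singular point $p$ of $C_0$ lying on a simple component, using the fact that each $C_t$ ($t>0$) is a smooth M-curve and that the combinatorial constraint of Definition \ref{dH} persists in the limit. First I would reduce to the statement about the type of a singular point: since $C_t$ is smooth for $t>0$, the singularity of $C_0$ at $p$ is obtained by a vanishing-cycle degeneration, and the key numerical input is the conservation of genus together with the Harnack (maximality) count. The engine is Proposition \ref{intIa}: the period map $I^a$ is a local diffeomorphism near any nodal curve in $|C|$, which means that within $|C|$ the locus of curves with a single node is a smooth divisor and that one can deform a node independently. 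So if $C_0$ had a singular point on a simple component that is worse than a node, one would produce, by a generic small perturbation inside $|C|$ and using $I^a$, a nearby curve in the family whose arithmetic genus drop exceeds $1$ at that point; comparing with the constant geometric genus of the $C_t$ and the Harnack equality forces a contradiction. This is the step I expect to be the main obstacle: making precise that ``a degeneration of M-curves cannot acquire a singularity contributing more than a node's worth of genus to a simple component'' — one has to rule out, e.g., a tacnode or a cusp on a simple branch, and also rule out two simple components meeting non-transversally, which requires a careful count of how the real and complex topology of $C_t$ would have to behave and an appeal to the semicontinuity of $\delta$-invariants along with the fact that the total geometric genus and number of real ovals are both locked by maximality.

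Next I would treat the non-real nodes. If $p$ is a non-real node, then $\bar p = \sigma(p)$ is also a node, and the two must be exchanged by $\sigma$. I would argue that $p$ cannot be a self-node of a single simple component $D$: a self-node at $p$ forces, after normalizing, either a new real oval or a change in the dividing type, both of which violate the M-curve/complex-orientation persistence recorded in Definition \ref{dH} for all $t>0$ (equivalently, the smoothing $C_t$ near $p \sqcup \bar p$ would have to create a handle that destroys maximality). Hence a non-real node is necessarily a transverse intersection of two distinct simple components. The real-node case is the heart of the matter: if $p \in \R X$ is a real node on simple components, the local real model of a node is either $x^2+y^2=0$ (solitary, an isolated real point) or $x^2-y^2=0$ (two real branches crossing). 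In the solitary case there is nothing more to prove. In the hyperbolic case $x^2-y^2=0$, I would invoke Definition \ref{dH}: for $t>0$ the two real branches at $p$ belong to $\R C_t$, and the smoothing of $x^2 - y^2 = 0$ to $x^2 - y^2 = \pm\epsilon$ joins the two branches into a single arc on one of the two sides; the complex-orientation-compatibility condition (alternation around a component $K \subset \R X\setminus \R C$, or the $m$-component picture) then pins down which smoothing occurs and in particular forces the two branches at $p$ to come from the \emph{same} connected component of $\R C_t$. Simultaneously, were the two branches to come from distinct simple components that are not globally ``glued along $p$ only,'' the curve $C_0$ would be reducible in a way incompatible with the persistence of the M-property; so $C_0$ must be a union of exactly two real curves meeting only at $p$.

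Finally, I would assemble these local analyses into the global statement. The dichotomy ``singular point on a multiple component / node'' follows from the genus-conservation argument of the first paragraph applied at every singular point lying on a simple component. The classification of nodes into (non-real $\Rightarrow$ transverse crossing of two simple components), (real solitary), (real crossing of two real simple components) follows from the second and third paragraphs, and in the last case the description of $C_0$ as a union of two real curves meeting only at $p$, with both branches coming from one component of $\R C_t$, is exactly what the orientation bookkeeping of Definition \ref{dH} yields in the limit $t \to 0$. The main technical care is in the first paragraph — controlling the $\delta$-invariant drop along the family and translating the Harnack equality into a rigidity statement — so I would spend the bulk of the write-up there, citing Proposition \ref{intIa} and Lemma \ref{intI} for the smoothness and independence of nodal deformations inside $|C|$, and the complex-orientation formalism (after \cite{Ro}) for the real local models.
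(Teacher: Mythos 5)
Your proposal correctly identifies every assertion that needs to be proved, but the mechanism you offer for the central step — ruling out singularities worse than nodes on simple components — does not work, and it is not the paper's mechanism. Proposition \ref{intIa} is formulated only on the space $\widetilde\MM^a$ of \emph{at worst nodal} curves, so it cannot be bootstrapped into a statement that non-nodal curves do not occur in the closure of the family; and there is no genus-conservation contradiction of the kind you describe: a family of smooth genus-$g$ curves in $|C|$ can perfectly well limit onto a cuspidal or tacnodal curve of smaller geometric genus, and the Harnack count does not forbid this, since real ovals can also disappear in the limit (that is exactly what happens at a solitary node). Semicontinuity of the $\delta$-invariant gives an inequality in the wrong direction for your purposes. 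What actually excludes worse singularities in the paper is a local vanishing-cycle analysis at each singular point $p$: one builds a conjugation-equivariant system of vanishing cycles $Z_t\subset C_t$ with embedded membranes $M_t$ of self-intersection $-1$, uses that the dual graph of the vanishing cycles is a tree, and — crucially — uses that each half of $C_t\setminus\R C_t$ has genus $0$ because $C_t$ is an M-curve. For a non-real $p$ this forces $A_t$ (the union of vanishing cycles) to be a single cycle, whence $p$ is a node, and the separation of $C_t$ by $A_t\cup\sigma(A_t)$ forces it to be a transverse crossing of distinct components; your paragraph on non-real nodes gestures at this but supplies no argument.

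The real-node case is likewise asserted rather than proved in your write-up. You say Definition \ref{dH} ``pins down which smoothing occurs'' and ``forces the two branches to come from the same connected component,'' but the paper has to work for this: it first produces a $\sigma$-invariant vanishing cycle $Z^r$ with $\sigma$-invariant membrane $M^r$ (using the tree structure and the disconnectedness of $C_t\setminus\R C_t$ to exclude an invariant edge with non-real endpoints), and then runs a parity argument — the self-intersection of $M^r$ is $-1$ (odd), while if the real arc $\gamma=M^r\cap\R X$ connected two \emph{different} components of $\R C_t$, the orientation coherence of Definition \ref{dH} would force the intersection number of $\gamma$ with a perturbed copy to be even. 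This contradiction is precisely what shows the two real branches at $p$ come from the same component of $\R C_t$, and it simultaneously yields that $C_t\setminus Z^r$ is disconnected, i.e., that $C_0$ splits as two real curves meeting only at $p$. A similar intersection count ($Z'\cap\sigma(Z')$ would need at least two points) shows a pointwise-fixed real vanishing cycle is alone, giving the solitary-node case. Without the membrane self-intersection computation and the genus-$0$-halves input, the statement does not follow from orientation ``bookkeeping'' alone, so I would say the proposal has a genuine gap at both of its load-bearing steps.
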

\begin{coro}\label{cK3}
If $C_0 \subset X$ is a reduced irreducible degeneration of simple Harnack curves, 
then all singular points of $C_0$ are solitary nodes. 
\end{coro}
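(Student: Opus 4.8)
The plan is to read off the corollary directly from Proposition \ref{hK3} by eliminating, one by one, the types of singularities that are incompatible with $C_0$ being reduced and irreducible. First I would invoke the first sentence of Proposition \ref{hK3}: a singular point of $C_0$ either lies on a multiple component or is a node. Since $C_0$ is reduced by hypothesis, it has no multiple components, so every singular point of $C_0$ is a node. Thus it remains to classify these nodes using the second and third sentences of Proposition \ref{hK3}.

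Next I would rule out non-real nodes. By Proposition \ref{hK3}, a non-real node of $C_0$ is a transverse intersection point of two \emph{different} simple components of $C_0$; but $C_0$ is irreducible, so it has only one component and no such pair of distinct components exists. Hence $C_0$ has no non-real nodes. The same reasoning applies to the second alternative for a real node: a real node that is a transverse intersection of two different real simple components is again impossible, because irreducibility forbids two distinct components. The only surviving possibility in the trichotomy of Proposition \ref{hK3} for a real node is therefore that it is a solitary node, given in local analytic coordinates by $x^2+y^2=0$.

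Combining these observations: every singular point of $C_0$ is a node (by reducedness), every node is real (no non-real nodes by irreducibility), and every real node is solitary (the intersection-of-two-components alternative being excluded by irreducibility). This gives exactly the assertion that all singular points of $C_0$ are solitary nodes, completing the proof.

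There is essentially no obstacle here: the entire content has been packed into Proposition \ref{hK3}, and the corollary is a bookkeeping consequence of the hypotheses ``reduced'' (kills multiple components) and ``irreducible'' (kills both configurations requiring two distinct components). The only point worth stating carefully is that irreducibility is used twice — once to exclude non-real nodes and once to exclude real nodes arising from a union of two curves meeting at a point — so that the only item of the trichotomy left standing is the solitary node.
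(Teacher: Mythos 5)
Your argument is correct and is exactly the intended one: the paper states Corollary \ref{cK3} without proof as an immediate consequence of Proposition \ref{hK3}, and your case elimination (reducedness kills multiple components, irreducibility kills both node types requiring two distinct components, leaving only solitary nodes) is precisely that consequence spelled out.
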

%Recall that a solitary point $p\in S$ is a real singular point $p\in\R X$ such that every analytic
%branch of $S$ through $p$ is not real.
%In other words, the connected component of $\R S$ containing $p$ coincides with $\{p\}$.

\begin{proof}[Proof of Proposition \ref{hK3}]
%\mnote{Slava asks for references and explanations
%concerning membranes} 
%If $C \subset X$ is reduced, then
Away from multiple components,
each singular point $p \in C_0$ is isolated 
and (as a hypersurface singularity) can be described
through vanishing cycles on a curve $C_t$, $t>0$, which is a simple Harnack curve.
Each vanishing cycle $Z_t \subset C_t$ corresponds to a critical point of a morsification of $p$ 
({\em i.e.}, a holomorphic function with non-degenerate critical points 
%approximating 
which approximates the local 
equation of $C_0$ near $p$). Definition and properties of vanishing cycles can be found in \cite{Milnor}.
To find an appropriate collection of conjugation-invariant vanishing cycles we follow the procedure below.

Near a real singular point $p$ of $C_0$ the family of curves $C_t$ can be given as the zero set 
of a family of holomorphic functions 
$f_t:U\to\C$ on a small neighbourhood $p\in U\subset X$. 
Here $U$ can be chosen to be $\sigma$-invariant with the contractible real part $\R U=U\cap\R X$,
while 
$f_t$ can be chosen to be real ({\it i.e.}, $\conj \circ f_t = f_t \circ \sigma$, 
where $\conj: \C \to \C$
%commuting with 
is the complex conjugation).   
Multiplying by (non-vanishing on $U$) holomorphic functions if needed, we may assume that 
$f_t$, $t>0$, is a complex Morse function ({\it i.e.}, its critical points are isolated and have non-degenerate Hessians). 
Similarly, we may also assume for $f_t$ that the images of different critical points are
different and that the image of a non-real critical point is not real.

The multiplication trick also allows us to assume that the restriction $f_t|_{\R U}$ is a generic Morse function,
{\it i.e.}, that the stable and unstable manifolds for different critical points are transverse. 
Suppose that there exist two critical points with positive critical values, with indices different by 1, 
and such that 
%they 
these points are connected with a gradient trajectory. Then, such a gradient trajectory must be unique. 
Indeed, since the index of one of the critical point must be one, there could be not more than two such trajectories.
%But 
However, existence of two trajectories would imply a non-trivial mod 2 homology cycle in $\R U$ which is impossible. 
If two critical points with positive values are connected with a single trajectory, then these critical points are removable. 
Multiplying $f_t$
by an appropriate non-vanishing real function we can make such a pair of critical points into a complex conjugate pair.
Thus, inductively, we may assume that no pair of critical points with positive critical values
can be connected with a gradient trajectory. (Note that the points of indices 0 and 2 cannot be connected in this way,
since in the absence of trajectories to index 1 points it would imply an $S^2$-component for $\R U$.)

The critical points of $f_t$ for small $t>0$ can be thought of as the result of perturbation of the singular point $p$ for $f_0$.
% by means of $f_t$ of 
%the singular point $p$ of $f_0$.
In particular, the number of critical points of $f_t$ coincides 
with the Milnor number $\mu_p$ of the singularity $p$.
The set 
%critical values 
$\Pi_t\subset\C$ 
of critical values of $f_t$ 
%are 
is $\conj$-invariant and close to zero. 
Let us connect the points of $\Pi_t$ with $0$ by a $\conj$-invariant collection $\Gamma_p$
of $\mu_p$ smooth embedded paths (the paths connecting real points of $\Pi_t$ to $0$ may contain each other). 

Let $\gamma_t$ be one of these paths.
Its inverse image $f^{-1}_t(\gamma_t)\subset U$ is a hypersurface. A $\sigma$-anti-invariant K\"ahler symplectic form on $X$ 
has a 1-dimensional radical in the tangent space to $f^{-1}_t(\gamma_t)$.
With its help a tangent vector field 
%oriented  
to $\gamma_t$, oriented towards the critical value, canonically lifts to a vector field in $f^{-1}_t(\gamma_t)$ vanishing at a critical point of $f_t$. 
We define the membrane $M_t \subset X$ as the stable manifold of this point.
Since the critical points of $f_t$ are Morse, and no trajectories over our paths may connect critical points,
$M_t$ is an embedded disk.
The corresponding vanishing cycle $Z_t\subset C_t$ is the boundary of this disk $M_t$.
We have $M_t \cap C_t = Z_t$, while the membrane $M_t$ is never tangent to the curve $C_t$ along $Z_t$. 
It is an embedded disk in $X$ of self-intersection $-1$ 
(to define the self-intersection of a membrane we use a normal vector field to $Z_t$ in $C_t$
as the boundary framing).

For non-real singular points $p\in C_0$ the construction of the cycles $Z_t$ and the membranes $M_t$
is similar but locally we do not have to worry about the complex conjugation invariance.
Instead we use $\sigma(Z_t)$ and $\sigma(M_t)$ for the singular point $\sigma(p)\in C_0$.
%Instead we may ensure this
%property by 
%
%The cycle $Z_t \approx S^1$ bounds a membrane $M_t \subset X$,
%$M_t \cap C_t = Z_t$, $\dd M_t = Z_t$, which is an embedded disk in $X$ of self-intersection $-1$ 
%(to define the self-intersection of a membrane we use a normal vector field to $Z_t$ in $C_t$
%as the boundary framing).

%The membrane $M_t$ is never tangent to $\R C_t$ at the boundary.
%We choose a real morsification of $p$, so
For a given singular point $p \in C_0$, denote with $A_t \subset C_t$ the union of all vanishing cycles in $C_t$, 
and with $B_t \subset X$ 
the union of all their membranes. 
%
%are $\sigma$-invariant. 
%Each membrane is either invariant with respect to the complex conjugation on $X$
%or is exchanged with a membrane for another vanishing cycle.
%For a singular point $p\in S$ we denote with
%$K_p\subset S_t$ the union of the vanishing
%cycles at $p$
%and with $N_p\subset X$ the union of their membranes.
Both spaces $A_t$ and $B_t$ are connected.
Their union over all singular points of $C_0$ is  $\sigma$-invariant. 
% and with $N_p\subset X$ the union of their membranes.
%The Milnor fiber $M_p\subset S_t$ is a subsurface of $S_t$ that can be obtained
%as a regular neighborhood of $K_p$.
%The boundary components of $M_p$ correspond to analytic branches
%of $S$ at $p$.
%%The union $M_p\cup N_p$ must be contractible, so $K_p$ is connected while
%%two intersection
The vanishing cycles from
$A_t$ 
intersect transversely. 
Two cycles are either disjoint or intersect in a single point.
The dual graph of the vanishing cycles from $A_t$ cannot have cycles, see \cite{Milnor}. 
%(so that $C_t \cup B_t$ is contractible). 

Suppose that $p\notin\R C_0$. Then $A_t \subset C_t \setminus \R C_t$, 
but each component of $C_t \setminus \R C_t$ is of genus $0$ since $C_t$ is an M-curve. 
Thus, $A_t$ consists of a single vanishing cycle, and $p$ is a Morse point. 
Furthermore, $A_t \cup \sigma(A_t)$ 
separates $C_t$ into several connected components, %\mnote{It: why not three?}  
so $p$ must be a transverse intersection point of distinct components from $C_0$. %\mnote{It: $C_0$} 

Suppose that $p \in \R C_0$. Then, the tree of vanishing cycles of $A_t$ 
is $\sigma$-invariant, so it must have an invariant vertex or an invariant edge. 
However, an invariant edge would correspond to a transverse intersection of vanishing 
cycles. If these cycles are not real, then they intersect $\R C_t$ transversely 
in a single point which is impossible since $C_t \setminus\R C_t$ is disconnected. 
Thus, $A_t$ possesses at least one $\sigma$-invariant vanishing cycle $Z^r$
whose membrane $M^r$ is also $\sigma$-invariant. 

Suppose that $\sigma$ acts on $Z^r$ non-trivially. Then $Z^r\cap\R C_t$ consists
of two points, while $\gamma=M^r\cap\R X$ is a path connecting these points and transversal 
to $\R C_t$ at the endpoints. Let $M'\subset X$, $\dd M'\subset C_t$, 
be a small perturbation of the membrane $M^r$ such that $\dd M'$ and $\dd M^r$
are disjoint. Let $\gamma'=M'\cap\R X$. The parity of the self-intersection of $M^r$
coincides with the intersection number of $\gamma$ and $\gamma'$ since
all other points of  $M^r\cap M'$ come in pairs. This parity in its turn is determined
by the displacement of $\dd\gamma'\subset\R C_t$ with respect to $\dd\gamma\subset\R C_t$.
Let us enhance $\R C_t$ with the boundary orientation of one of the halves of
$C_t\setminus\R C_t$. 
Since $\dd M^r\cap\dd M'=\emptyset$, one of the points of $\dd\gamma$ must move in
the direction of this orientation, while the other one moves contrary to this direction.
Definition \ref{dH} implies that the intersection number of $\gamma$ and $\gamma'$ is even
whenever $\gamma$ connects two different components of $\R C_t$.
However, this is incompatible with the odd self-intersection of $M^r$. 

%Furthermore, 
If $\gamma$ connects
a component $L\subset\R C_t$ with itself, then
%$L$ must be non-contractible in
%a component $\Sigma\subset\R X$ such that $\Sigma\cap\R C_t=L$.
%In this case $Z^r$ 
$C_t\setminus Z^r$ is disconnected. 
Thus, %\mnote{It: sentence added} 
$C_0$ is a union of two real curves intersecting only at $p$.
%(since $Z^r$ is disjoint from $g$ cercles forming $\R C_t \setminus L$). 
%Therefore, 
Furthermore, $A_t=Z^r$, since otherwise there must be another cycle $Z'\subset A_t$
intersecting $Z^r$ transversally at a single point. In this case $p$ is an ordinary
double point with two real branches which is a transverse
intersection point of two distinct components of $C_0$. 

Any other real vanishing cycle $Z^r\subset C_t$ must be point-wise preserved by $\sigma$.
Suppose that $Z^r$ intersects another cycle $Z'$ in $A_t$.  
The cycle $Z'$ cannot be point-wise preserved since it intersects $Z^r$ at a single point.
Thus, $Z'$ is imaginary. 
Then, since
$Z'$ and $\sigma(Z')$ are transverse and $C_t \setminus \R C_t$ is disconnected,
$Z'\cap \sigma(Z')$ consists at least of two points which is impossible.
Therefore, any solitary real singular point $p \in C_0$ %\mnote{It: `solitary'?} 
has a unique vanishing cycle
corresponding to an oval of $\R C_t$, which implies that $p$ is 
%an ordinary solitary point. 
a solitary node. 
\end{proof}

\begin{rem}\label{rem-alt}
The proof of Proposition \ref{hK3} is based 
on concordance, ensured by a real vanishing cycle,
of complex orientations of a dividing 
real curve. 
% on components of real curves connected with
%ensured by a real vanishing cycle. 
This concordance is a well-known phenomenon in real algebraic geometry, 
responsible, in particular, for {\em Fieldler's orientation alternation}, see \cite{Viro-orFiedler}.
\end{rem}

\section{Deformations of 
%$M$-curves 
simple Harnack curves in K3-surfaces}
Assume that the surface $X$ is not hyperbolic. For a 
%smooth M-curve 
simple Harnack curve $C\subset X$ 
we choose an order on the components $L_0,\dots,L_g\subset\R C$, as well
as their orientations compatible with a half of $C\setminus\R C$.
Real curves in $X$ linearly equivalent to $C$ form the real part 
$\R |C|\approx \rp^g$ of the projective space $|C|\approx\pp^g$.
The homology classes $a_j=[L_j]\in H_1(C)$, $j = 1, \ldots, g$, form a 
%basis 
maximal collection of $a$-cycles 
making the map \eqref{mapIa} well-defined.
We denote with $\widetilde{\R\MM^a}$ the fixed locus of the involution induced by $\sigma$
on $\widetilde{\MM^a}$, the source of map \eqref{mapIa}.

%The space $\MM\cap\R |C|$ of smooth real curves in $\R |C|$ is usually disconnected.
%Consider an element $(C',\gamma)$ of the connected component of $\R\MM^a$ containing $C$. 
%The path $\gamma$ allows us to identify a component of $\R C'$ with one or 
Consider the subspace 
\[
\widetilde{\R\MM^i_C} \subset \widetilde{\R\MM^a} 
\]
consisting of pairs $(C',[\gamma])$, $\gamma:[0,1]\to \R |C|$, 
$\gamma(0)=C$, $\gamma(1)=C'$, %nodal real curves such that their nodes are disjoint .
where, for any $t \in [0, 1]$, 
the real curve $\gamma(t)$ is at worst nodal 
and 
%does not have real solitary nodes,
any non-singular real curve belonging to $\R |C|$ and sufficiently close to $\gamma(t)$
is simple Harnack. 
%We denote with $\R\MM^i_C\subset |C|$ the image of $\widetilde{\R\MM^i_C}$ under the forgetting map
%$(C',[\gamma])$
%\mnote{It is
%not a subspace of $\widetilde\R\MM^a$; if the definition is extended, some explanations
%are needed concerning vanishing cycles after going through singular curves;
%to move Lemma \ref{lem-appro} here?}  
%has $g$ non-singular components that are deformed by $\gamma$
%to the components $L_1,\dots,L_g\subset\R C$ so that they remain non-singular during
%the deformation.
%of smooth curves in $X$ linearly 
%equivalent to $C$ is usually disconnected.
%Denote with $\R\MM^i_C\subset\R\MM^i$ the component of $\R\MM^i$ containing
%a real curve $C\subset X$ and with $\widetilde{\R\MM^i_C}$ its universal covering.
%The space $\R\MM_C$ is called the {\em rigid isotopy class} of $C$.
%The space

\begin{rmk}\label{solitary_node}
If $(C',[\gamma]) \in \widetilde{\R\MM^i_C}$, then, for any $t \in [0, 1]$,
the curve $\gamma(t)$ does not have real solitary nodes.
\end{rmk}

\begin{prop}\label{non-m-smooth}  
Suppose that $L_j\subset\R C$ is not an $m$-component. 
Then, during the deformation $\gamma$, the component $L_j$ remains non-singular, 
i.e., we may consistently distinguish a smooth real component in $\gamma(t)$, $t\in[0,1]$,
coinciding with $L_j$ for $t=0$.
\end{prop}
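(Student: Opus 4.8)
The plan is to argue by contradiction, combining the structure theorem for degenerations of simple Harnack curves (Proposition~\ref{hK3}) with the defining property of $\widetilde{\R\MM^i_C}$ that every vanishing cycle occurring along $\gamma$ is disjoint from the family $a$ of $a$-cycles. First I would reduce to the case that $L_j$ is one of $L_1,\dots,L_g$: since $X$ is not hyperbolic, $\R C$ has at most one $m$-component, so after reordering we may take $L_0$ to be that $m$-component when it exists, and then any non-$m$-component $L_j$ is among $L_1,\dots,L_g$, i.e.\ $[L_j]$ is one of the classes $a_1,\dots,a_g$, represented by a loop of the family $a$ whose complement in $C$ is a sphere with $2g$ holes. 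Suppose now that $L_j$ becomes singular along $\gamma$ and let $t_0$ be the infimum of the corresponding parameters. For $t<t_0$ we can track a smooth circle equal to $L_j$ at $t=0$; at $t=t_0$ this circle meets a singular point $p$ of $\gamma(t_0)$. Since curves along $\gamma$ are at worst nodal and have no real solitary nodes (Remark~\ref{solitary_node}), Proposition~\ref{hK3} applies: $p$ is a transverse self-intersection of the tracked circle and $\gamma(t_0)=C'\cup C''$ is a union of two real curves meeting only at $p$, with the two local branches at $p$ belonging to the same component, namely $L_j$.

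Next I would pin down $C'$, $C''$ and the relevant vanishing cycle. As $p$ is the only node of $\gamma(t_0)$ (a further real node of $C'$ or $C''$ would be non-solitary, hence forbidden by Proposition~\ref{hK3}, since it cannot be a transverse intersection of two distinct components of $\gamma(t_0)=C'\cup C''$), the curves $C'$, $C''$ are smooth and connected, of genera $g'$, $g''$ with $g'+g''=g$; writing $\ell'\subset\R C'$, $\ell''\subset\R C''$ for the circles through $p$, the circle $L_j$ degenerates as $t\uparrow t_0$ to the union $\ell'\cup\ell''$ joined only at $p$. A Harnack count then forces $C'$ and $C''$ to be $M$-curves: $\gamma(t)$ has $g+1$ real circles for $t<t_0$, and in the limit these give $\ell'\cup\ell''$ together with the $g$ remaining circles (which stay smooth, again by Remark~\ref{solitary_node}), so $\R\gamma(t_0)$ has $g+1$ connected components, whence $(\#\text{circles of }\R C')+(\#\text{circles of }\R C'')=g+2$ while each summand is at most its genus plus one, forcing equality. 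Moreover $g'\ge1$ and $g''\ge1$: for $C'$ smooth, adjunction gives $[C']^2=2g'-2$, and since $[C]=[C']+[C'']$ with $[C']\cdot[C'']=1$ while $C$ is irreducible and distinct from $C'$, one gets $0\le[C]\cdot[C']=2g'-1$ (and symmetrically for $C''$). Finally, the vanishing cycle $Z^r$ of the degeneration of $\gamma(t)$ to $C'\cup C''$ is the simple closed curve on $\gamma(t)$ separating it into the genus-$g'$ and genus-$g''$ pieces.

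The contradiction then comes as follows. Since $g',g''\ge1$ and $C'$, $C''$ are $M$-curves, the real circles $\ell'$ and $\ell''$ are non-contractible in $C'$ and $C''$ respectively (a standard fact: any $g'$ of the $g'+1$ real components of a smooth $M$-curve are independent in $\Z_2$-homology). The circle $L_j$ meets $Z^r$ in exactly the two points into which $p$ opens up, splitting $L_j$ into a subarc on the genus-$g'$ side limiting onto $\ell'$ and a subarc on the genus-$g''$ side limiting onto $\ell''$. This intersection is essential: a bigon between $Z^r$ and $L_j$ on, say, the genus-$g''$ side would exhibit that subarc as cutting a disk off the genus-$g''$ piece, and passing to the limit $t\to t_0$ this disk would be bounded by $\ell''$ in $C''$, contradicting non-contractibility of $\ell''$; symmetrically on the other side. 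Hence the geometric intersection number of $L_j$ with $Z^r$ equals $2$, so $Z^r$ is not isotopic to any loop disjoint from $L_j$. But $(\gamma(t_0),[\gamma])\in\widetilde{\R\MM^i_C}\subset\widetilde{\R\MM^a}$, so $Z^r$ must be isotopic to a loop disjoint from the family $a$, in particular from $L_j$ — a contradiction. Therefore $L_j$ remains non-singular along $\gamma$.

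The hard part is the middle step: extracting from Proposition~\ref{hK3} the precise local-to-global picture $\gamma(t_0)=C'\cup C''$ with $C'$, $C''$ smooth $M$-curves of \emph{positive} genus whose distinguished real circles are non-contractible, and then converting the informal statement ``$L_j$ pinches at $p$'' into the isotopy-invariant inequality $i(L_j,Z^r)\neq0$ through the bigon/limit argument. The positivity $g',g''\ge1$ is exactly where irreducibility of $C$ and the intersection form of the K3-surface enter; the remainder is surface topology, plus the routine bookkeeping needed to justify the reordering used to place $[L_j]$ inside the family $a$.
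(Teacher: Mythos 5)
Your strategy is genuinely different from the paper's, and it has a gap that matters. The paper derives the proposition from the simple Harnack condition itself: by (the proof of) Proposition~\ref{hK3}, a real non-solitary node forces the pinching circle $L(t)$ to satisfy $[L(t)]\neq 0\in H_1(\R X;\Z_2)$ and, via Definition~\ref{dH}, to be alone in its (necessarily non-spherical) component of $\R X$, i.e.\ to be an $m$-component. You instead invoke the condition, built into $\widetilde{\R\MM^a}\supset\widetilde{\R\MM^i_C}$, that vanishing cycles are representable by loops disjoint from the family $a$, and show that a pinching of $L_j$ produces a vanishing cycle with geometric intersection number $2$ with $L_j$. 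That is a clean observation, but it only applies to the components $L_1,\dots,L_g$ that actually belong to the family $a$. Your opening reduction (``after reordering we may take $L_0$ to be the $m$-component'') is not legitimate: the family $a$, hence the spaces $\widetilde{\R\MM^a}$ and $\widetilde{\R\MM^i_C}$ and the hypothesis on $\gamma$, are fixed by the ordering chosen at the start of the section, and reordering changes the space in which $\gamma$ is assumed to live. Consequently your argument says nothing about the omitted component $L_0$ when it is not an $m$-component: a vanishing cycle meeting $L_0$ in two points while staying disjoint from $L_1,\dots,L_g$ is perfectly compatible with membership in $\widetilde{\R\MM^a}$. This case is not vacuous --- the proof of Proposition~\ref{inDelta} applies the present proposition precisely to $L_0$ when $[L_0]=0$. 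Closing it requires the orientation argument of the paper (forcing the pinching component to be an $m$-component), not the disjointness-from-$a$ condition.

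A secondary point: your claim that $p$ is the only node of $\gamma(t_0)$, so that $C'$ and $C''$ are smooth connected $M$-curves, is not justified by the parenthetical you give. Proposition~\ref{hK3} does not exclude additional non-real nodes, nor further real nodes arising as transverse intersections of components of $C'$ or $C''$ with each other if those curves are themselves reducible; the global statement ``union of two real curves meeting only at $p$'' can hold simultaneously for several real nodes of a curve with three or more components. Since the non-contractibility of $\ell'$ and $\ell''$ --- the input to your bigon argument --- is extracted from the Harnack count for smooth irreducible $C'$, $C''$ of positive genus, this step needs repair (for instance by working only with the separating vanishing cycle at $p$ and ruling out boundary-parallel arcs directly). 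The estimate $g',g''\ge 1$ via $[C]\cdot[C']=2g'-1\ge 0$ is fine, and the $j\ge 1$ half of your argument is salvageable; the $L_0$ case is the real obstruction.
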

\begin{proof}
%Let $t_0 \in (0, 1]$ be the smallest value corresponding to a singular curve in the deformation $\gamma$. 
Let us, first, show the statement assuming that $\gamma(t)$ is a non-singular curve
for any $t \in [0, 1]$. If $p \in \gamma(1)$ is a singular point, then
by Proposition \ref{hK3}, 
%the only possible real singularity of $\gamma(1)$ 
the point $p$ corresponds to 
a transversal intersection of two different real irreducible components of the reducible curve $\gamma(1)$,
and $p$ is a unique intersection point of these components.
Furthermore, the vanishing cycle of $p$ connects a real component $L(1 - \varepsilon)$ of $\gamma(1 - \varepsilon)$, 
$\varepsilon>0$, with itself.
Thus, $[L(1 - \varepsilon)] \ne 0 \in H_1(\R X; \Z_2)$, and 
Definition \ref{dH} implies that 
%$[L(1 - \varepsilon)] \ne 0 \in H_1(\R X; \Z_2)$
%and 
the non-spherical component $N \subset \R X$ 
containing $L(1 - \varepsilon)$ does not contain any other component of the real part of the curve $\gamma(1 - \varepsilon)$, 
that is, $L(1 - \varepsilon)$ is an $m$-component. 
This implies the statement of the proposition in the case considered. 
Moreover, 
% $p$ belongs to the non-spherical component $N$ of $\R X$
%and 
$g$ connected components of the real part of $\gamma(1 - \varepsilon)$
are contained in the union of spherical components of $\R X$. 
%$\bar K=K$ 
%for the component $K\subset \R X\setminus\R C$ adjacent to $L_j$.
%Thus, only an $m$-component may develop a singularity.

Consider now the general case. For any $t \in [0, 1]$, 
the curve $\gamma(t)$ is a degeneration of simple Harnack curves.
Thus, the particular case above implies that all singular points
of $\gamma(t)$ belong to $N$, 
%the non-spherical component $N$ of $\R X$
and $g$ connected components of the real part of $\gamma(t)$
are contained in the union of spherical components of $\R X$. 
This gives the statement required. 
\end{proof} 

%Assuming that $X$ is not hyperbolic we 
We may reorder the components of $\R C$
so that all components except possibly for $L_0$ are not $m$-components.
Thus, for any oriented component $L_j\subset\R C$, $j=1,\dots,g$, 
the map 
%\eqref{mapIz}
\eqref{mapIa} %\mnote{It: \eqref{mapIa} instead of \eqref{mapIz}} 
restricted to $\widetilde{\R\MM^i_C}$
induces the map 
\begin{equation}\label{rzmap}
\R I_{L_j}:\widetilde{\R\MM^i_C}\to\R.
\end{equation}
%since the vanishing cycles corresponding to imaginary singularities are
%disjoint from $\R C$. 

%For a smooth M-curve $C\subset X$
%we choose an order on its components $L_0,\dots,L_g$, as well
%as their orientations compatible with a half of $C\setminus\R C$,
We define 
\begin{equation}\label{rimap}
I_{\R C}=(\R I_{L_1},\dots,\R I_{L_g}):\widetilde{\R\MM^i_C}\to\R^{g}.
\end{equation}

%Let us choose one (of the two) complex orientation of $\R C$.
A component $L_j\subset\R C$, $j=0,\dots,g$, is either non-contractible
({\em i.e.},
%$[\R C]
$[L_j] \neq 0\in H_1(\R X;\Z_2)$) or such that 
$\Sigma_j\setminus L_j=\Sigma_j^+\cup\Sigma_j^-$ consists of two components.
Here, $\Sigma_j$ is the component of $\R X$ containing the component $L_j$. 
This component is oriented by the 2-form $\Omega$. We denote
with  $\Sigma_j^+$ the component of $\Sigma_j\setminus L_j$ whose boundary orientation
agrees with the chosen complex orientation of $\R C$ and with $\Sigma_j^-$ the other one.
Put 
\[
s_j^+=\int\limits_{\Sigma_j^+}\Omega=\ar(\Sigma_j^+),\ \ \ \ \ 
s_j^-=-\int\limits_{\Sigma_j^-}\Omega=-\ar(\Sigma_j^-).
\]
Clearly, $s_j^+-s_j^-=\ar(\Sigma_j)$.
If $L_j$ is non-contractible, we put $s_j^+=\infty$, $s_j^-=-\infty$.
Let
\[
\Delta=\{(x_1,\dots,x_g)\ |\ s_0^-<-\sum\limits_{j=1}^gx_j<s_0^+,\ s_j^-<x_j<s_j^+\}\subset\R^g.
\]

%The following proposition is an immediate corollary of Proposition \ref{intIa}.
\begin{prop}\label{inDelta}
%For an M-curve $C\subset X$ is a simple Harnack curve then 
%The image
%of $I_{\R C}$ is contained in the hyperplane
%\[
%H=\{\sum\limits_{j=0}^g x_g=0\}\subset \R^{g+1},
%\] 
%$\R\MM_S$ is contractible
%(in particular, $\widetilde{\R\MM_S}=\R\MM_S$) and
%and 
The map
%\begin{equation}\label{iRC}
$$I_{\R C}:\widetilde{\R\MM^i_C}\to \R^g$$
%\end{equation}
is a local diffeomorphism whose image is contained in $\Delta$.
%to the hyperplane
%$\{\sum\limits_{j=0}^g x_g=0\}\subset \R^g$.
%The image of $I_{\R S}$ is a diffeomorphism between $\R\MM_S$ and
%\[
%\Delta_S=\{(x_0,\dots,x_g)\ |\ \sum\limits_{j=0}^g x_g=0\}\cap
%\bigcu
%a diffeomorphism b 
\end{prop}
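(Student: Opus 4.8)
The plan is to combine Proposition \ref{intIa} (that $I^a$, hence its real restriction, is a local diffeomorphism) with a boundary analysis of the period integrals $\R I_{L_j}$ along the deformation. The fact that $I_{\R C}$ is a local diffeomorphism is almost immediate: the complexification of $\widetilde{\R\MM^i_C}$ sits inside $\widetilde{\MM^a}$, the map $I^a$ is a local diffeomorphism by Proposition \ref{intIa}, and the $a$-cycles $a_j = [L_j]$ are $\sigma$-invariant (being represented by real components $L_j$ of $\R C$), so each $I_{a_j}$ takes real values on $\widetilde{\R\MM^i_C}$ and the real differential of $I_{\R C} = (\R I_{L_1},\dots,\R I_{L_g})$ is just the restriction of $dI^a$ to the fixed locus. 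Injectivity of $dI^a$ therefore gives injectivity of $dI_{\R C}$, and since source and target both have dimension $g$, it is a local diffeomorphism. The substantive content is the containment of the image in $\Delta$.

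For the containment, I would argue one coordinate inequality at a time. Fix $j \in \{1,\dots,g\}$; by Proposition \ref{non-m-smooth} and the chosen reordering, $L_j$ is not an $m$-component, so throughout the deformation $\gamma$ we may track a smooth real component $L_j(t)$ with $L_j(0) = L_j$, contained in a spherical component $\Sigma_j$ of $\R X$ (here I use that $X$ is not hyperbolic and $L_j$ is contractible, so $\Sigma_j \setminus L_j$ splits as $\Sigma_j^+ \sqcup \Sigma_j^-$). The key geometric observation is that $\R I_{L_j}(\tilde C')$ computes the signed $\Omega$-area of the region in $X$ swept out by the cycle $a_j$ under the monodromy from $C$ to $C'$; since $\Omega$ restricted to $\R X$ is the area form $\alpha$ and the sweep can be realized within $\R X$ (the deformation stays in $\R |C|$), this period equals, up to sign, the area of the region in $\Sigma_j$ between $L_j$ and $L_j(t)$. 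As $L_j(t)$ is a smooth embedded circle in the sphere $\Sigma_j$ that stays disjoint from the other real components and does not degenerate, this region is a proper subsurface of $\Sigma_j$; letting it degenerate so that $L_j(t)$ shrinks onto the boundary, the area approaches $\ar(\Sigma_j^+) = s_j^+$ from below (respectively $-\ar(\Sigma_j^-) = s_j^-$ from above), with the sign dictated by whether the boundary orientation of the swept region agrees with the complex orientation. Hence $s_j^- < \R I_{L_j}(\tilde C') < s_j^+$. The constraint on $L_0$ is handled the same way after noting that $a_0 = [L_0] = -(a_1 + \dots + a_g)$ in $H_1(C)$ (the full real locus is null-homologous on the dividing curve $C$, being the boundary of a half of $C \setminus \R C$), so $\R I_{L_0} = -\sum_{j=1}^g \R I_{L_j}$ and the same area argument applied to $\Sigma_0$ gives $s_0^- < -\sum x_j < s_0^+$. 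When $L_j$ is non-contractible we set $s_j^\pm = \pm\infty$ and the inequality is vacuous.

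The main obstacle I anticipate is making precise the identification of the period $\R I_{L_j}$ with an honest area in $\Sigma_j$, and in particular pinning down the signs and the fact that the swept region exhausts $\Sigma_j^+$ (but not more) in the limit. One must check that the monodromy surface $Z_\gamma$ appearing in \eqref{mapIz} can be chosen inside $\R X$ — this uses that $\gamma$ is a path in $\R|C|$ and that $L_j(t)$ varies smoothly and without singularity by Proposition \ref{non-m-smooth} — so that $\int_{Z_\gamma} \Omega = \int_{Z_\gamma} \alpha$ is literally the area of a subsurface of $\Sigma_j$ bounded by $L_j \cup L_j(t)$. Once this is set up, the open inequalities (strict, not weak) come from the fact that a smooth embedded $L_j(t)$ can never coincide with a boundary curve of $\Sigma_j^\pm$, since those boundary curves lie on other real components or are limits where $L_j(t)$ itself collapses, neither of which occurs for $(C',[\gamma]) \in \widetilde{\R\MM^i_C}$ — this is exactly what Remark \ref{solitary_node} and Proposition \ref{non-m-smooth} guarantee. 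The boundedness of the image, i.e. that the area cannot "wrap around" the sphere, follows because $L_j(t)$ stays embedded and disjoint from the other $L_i(t)$, so it bounds a well-defined region on one side whose area is trapped strictly between $0$ and $\ar(\Sigma_j)$ when measured with the appropriate base-point normalization coming from $L_j = L_j(0)$.
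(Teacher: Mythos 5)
Your proposal is correct and follows essentially the same route as the paper: the local diffeomorphism property is imported from Proposition \ref{intIa}, the periods $\R I_{L_j}$ are identified with areas of membranes that can be taken inside $\R X$ (using that $\Omega$ vanishes on holomorphic curves, so the membrane's area depends only on its class in $H_2(X,C\cup C')$), the strict inequalities come from positivity of the areas of the two complementary regions in $\Sigma_j$ bounded by the tracked oval, and the $L_0$-constraint comes from $[L_0]+\dots+[L_g]=0$ in $H_1(C)$ together with Proposition \ref{non-m-smooth}. No gaps worth flagging.
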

\begin{proof}
The map $I_{\R C}$ is a local diffeomorphism by Proposition \ref{intIa}.
Note that for a holomorphic curve $C'\subset X$
the area of a membrane whose boundary is contained in $C\cup C'$
depends only on the class of the membrane in $H_2(X,C\cup C')$.
In particular, to compute $I_{\R C}$ we may use the membranes contained in 
%$\R X\setminus N$. 
$\R X$. %\mnote{It: $\R X$} 
We have $s_j^-<\R I_{L_j}(C',\gamma)<s_j^+$ since the corresponding oval of $\R C'$
bounds two membranes of areas $s_j^+-\R I_{L_j}(C',\gamma)$
%\mnote{Slava asks for details
%concerning these membranes} 
and 
$\R I_{L_j}(C',\gamma)-s_j^-$, so these differences must be positive.
If $[L_0]=0\in H_1(\R X)$, then the corresponding ovals of the real curves from the deformation $\gamma$
cannot develop singularities by 
%Proposition \ref{hK3}.
Proposition \ref{non-m-smooth}. %\mnote{It: Proposition \ref{non-m-smooth} instead of Proposition \ref{hK3}} 
Since $[L_0]+\dots+[L_g]=0\in H_1(C)$, the corresponding oval of $C$ bounds
the membranes of area $s_0^++\sum\limits_{j=1}^g\R I_{L_j}(C',\gamma)$ and 
%$\sum\limits_{j=1}^g\R I_{L_j}(C',\gamma)-s_j^-$, 
$-\sum\limits_{j=1}^g\R I_{L_j}(C',\gamma)-s_0^-$, %\mnote{It: corrected} 
so these quantities are also positive.
\end{proof}

\section{Proof of Theorem \ref{main}} 
For any algebraic curve $D$ (not necessarily irreducible or reduced), the {\it multiplicity} of $D$
is the minimum among the multiplicities
of irreducible components of $D$. 

\begin{lem}\label{agenus}
Let $D \subset X$ be a real algebraic curve. 
Assume that $D$ is
either connected or consists of two connected non-real conjugated curves.
Put $d = [D] \in H_2(X; \Z)$, and denote by $k$ 
%the minimum among the multiplicities
%of irreducible components of 
the multiplicity of $D$. 
Then, the number of connected components of the real part $\R D$ of $D$
is at most $2+d^2/2k^2$.  
\end{lem}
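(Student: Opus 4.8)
The plan is to bound the number of components of $\R D$ by a genus-type count, exploiting the inequality $[C].[C] = 2g-2$ provided by the adjunction formula and the hypothesis that $D$ has multiplicity $k$. Write $D = \sum_i m_i D_i$ with $D_i$ the irreducible components and $m_i \ge k$. First I would reduce to estimating the arithmetic genus of the reduced curve $D_{\red}$, or rather of the (normalization of the) union of the $D_i$, since the real components of $\R D$ are carried by the real loci $\R D_i$ and each $\R D_i$ is a disjoint union of circles. The Harnack-type bound says the number of circles in $\R D_i$ is at most $p_a(D_i) + 1$, and for a (possibly reducible, possibly non-connected-but-conjugate) configuration one gets a total bounded by $p_a + (\text{number of components})$ with a correction for the connectivity; the hypothesis that $D$ is connected or a conjugate pair is exactly what keeps this correction to an additive constant $2$ rather than something depending on the number of irreducible pieces.

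The key computation is to relate $p_a$ of the relevant curve to $d^2 = [D].[D]$ and $k$. On a K3-surface the adjunction formula reads $p_a(E) = 1 + \tfrac12 [E].[E]$ for any curve $E$. Applying this to $E = \sum_i D_i$ (the reduced total curve, with class $d_{\red} = \sum_i [D_i]$), one gets $p_a(D_{\red}) = 1 + \tfrac12 d_{\red}^2$. The point is then to compare $d_{\red}^2$ with $d^2 = (\sum_i m_i [D_i])^2$. Since each $m_i \ge k$, and since the pairwise intersection numbers $[D_i].[D_j]$ for $i \ne j$ are nonnegative (distinct irreducible curves) while the self-intersections $[D_i]^2$ may be negative ($-2$-curves) but are controlled, one expects $d^2 \ge k^2 d_{\red}^2$ up to the contributions of $(-2)$-curves — and those only help, since they make $p_a(D_i)=0$, i.e. force $\R D_i$ to be at most one circle, an even better bound. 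So, roughly, $\tfrac12 d_{\red}^2 \le \tfrac{d^2}{2k^2}$, and the number of components of $\R D$ is at most $p_a(D_{\red}) + 1 \le 2 + \tfrac{d^2}{2k^2}$, absorbing the connectivity correction into the additive $2$.

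Concretely I would proceed in the following steps: (1) observe $\R D = \bigcup_i \R D_i$ and that each $\R D_i$ is a disjoint union of at most $g(\widetilde{D_i}) + 1$ circles by Harnack, where $\widetilde{D_i}$ is the normalization; (2) pass from the $D_i$ to their union $D_{\red}$ and use additivity of first Betti numbers together with the connectedness (or conjugate-pair) hypothesis to get $\#\{\text{components of }\R D\} \le b_1^{\mathrm{top}}\text{-type bound} \le p_a(D_{\red}) + 2$; (3) apply adjunction on the K3-surface: $p_a(D_{\red}) = 1 + \tfrac12 d_{\red}^2$; (4) show $d_{\red}^2 \le d^2/k^2$ using $m_i \ge k$, nonnegativity of $[D_i].[D_j]$ for $i\ne j$, and the fact that any irreducible component with negative self-intersection is a $(-2)$-curve contributing a rational (hence at most one real circle) component which is dominated anyway; (5) combine to get the stated bound $2 + d^2/2k^2$.

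The main obstacle I expect is step (4) together with the bookkeeping in step (2): controlling $d_{\red}^2$ versus $d^2$ cleanly in the presence of $(-2)$-curves and of components that may meet each other, and making sure the "+2" (rather than a term growing with the number of irreducible components) is genuinely justified by the connectedness/conjugate-pair hypothesis. This is where one must be careful that the real loci of several irreducible components glued at real intersection points do not produce more circles than the arithmetic genus of the glued curve allows — essentially an application of the fact that each node either merges two circles into one or is solitary, so nodal degenerations never increase the Harnack count. Once that combinatorics is pinned down, the rest is the adjunction formula on $X$ and the elementary inequality $(\sum m_i v_i)^2 \ge k^2 (\sum v_i)^2$ in the appropriate cone.
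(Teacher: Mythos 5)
Your base case---$D$ irreducible over $\R$, handled by Harnack plus the bound on solitary real points by the difference of arithmetic and geometric genera, plus adjunction on the K3-surface---is essentially the paper's. The genuine gap is your step (4): the inequality $d_{\red}^2\le d^2/k^2$ is simply false in general, and it is exactly where the difficulty of the lemma is concentrated. Take $D=D_1+2D_2$ with $D_1$ irreducible, $[D_1]^2=2g-2$, and $D_2$ a real $(-2)$-curve with $[D_1].[D_2]=1$: then $k=1$, $d_{\red}^2=[D_1+D_2]^2=2g-2$, but $d^2=(2g-2)+4-8=2g-6<d_{\red}^2$. Your remark that $(-2)$-curves ``only help'' because they carry at most one circle does not repair this: passing to $D_{\red}$ does not change $\R D$ at all, so if steps (1)--(3) give the bound $2+d_{\red}^2/2$ for the reduced curve (which can be attained), the target bound $2+d^2/2k^2$ is strictly smaller than anything your argument controls, and no componentwise improvement on the rational pieces bridges that. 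The inequality $(\sum m_iv_i)^2\ge k^2(\sum v_i)^2$ you invoke needs all products $v_i.v_j$, \emph{including} $i=j$, to be nonnegative, which is precisely what fails for $(-2)$-classes.

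The paper's proof is organized to avoid forming the global reduction. It divides out the multiplicity only on pieces irreducible over $\R$, where $[D]=k[D']$ holds exactly and $d^2/k^2=(d')^2$ is an identity rather than an inequality; unions $D=D_1\cup D_2$ are then treated by induction on two-piece decompositions, using the cross term $d_1.d_2\ge nk_1k_2$ ($n$ the number of intersection points) and the dichotomy $n\ge2$ versus $n=1$. In the latter case the unique intersection point is real (the intersection set is conjugation-invariant) and merges two real circles, which is what actually justifies the additive constant you assert in your step (2) but do not prove. You also leave untreated the case where $D$ is a pair of conjugate non-real curves, in which $\R D$ is a finite set of points bounded by $d_1'.d_2'$ and one needs the computation based on $(d_i')^2\ge-2$. (For what it is worth, the first inequality in \eqref{decomposition} of the paper likewise requires $d_2^2\ge0$ or $k_1=k_2$ to hold literally, so configurations with multiple $(-2)$-curves are delicate for both arguments; but the paper isolates the issue inside an induction, whereas your step (4) assumes it away globally.)
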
 

In particular, $2+d^2/2k^2\ge 0$ under the hypotheses of the lemma.

\begin{proof}
Assume, first, that $D$ is irreducible over $\R$ (but not necessarily reduced).
Let $D'$ be the reduced curve having the same set of points as $D$. 
The curve $D'$ is real and $[D] = k[D']$. The required inequalities are equivalent
for $D$ and $D'$. If $D'$ is irreducible over $\C$, the required inequality for $D'$
is a corollary of the Harnack inequality and the fact that the number of solitary real points of $D'$
is bounded from above by the difference between the arithmetic and geometric genera of $D'$. 
Suppose that $D'$ has two irreducible components over $\C$ (exchanged by the anti-holomorphic involution of $X$),
and denote these components by $D'_1$ and $D'_2$. Put $d'_1 = [D'_1] \in H_2(X; \Z)$
and $d'_2 = [D'_2] \in H_2(X; \Z)$.
We have
$$
\frac{[D']^2}{2} + 2 = \frac{(d'_1)^2}{2} + \frac{(d'_2)^2}{2} + 2 + d'_1d'_2 \geq d'_1d'_2, 
$$
since $(d'_i)^2 \geq -2$, $i = 1$, $2$. The number of real points of $D'$ is bounded from above by $d'_1d'_2$.
This implies the required inequality for $D'$, and thus, for $D$. 

Assume now that $D = D_1 \cup D_2$, where $D_1$ and $D_2$ are two real curves
without common components.
Assume, in addition, that each of these two curves is either connected or consists of two connected non-real conjugated curves, 
and that the required inequality is true for $D_1$ and $D_2$. 
Put $d_i = [D_i] \in H_2(X; \Z)$, $i = 1$, $2$,
and denote by $k_i$ 
%the minimum among the multiplicities of the irreducible components 
the multiplicity of $D_i$, $i = 1$, $2$. 
Denote by $n$ the number of intersection points of $D_1$ and $D_2$.
Suppose that $k_1 \leq k_2$. In this case, $k = k_1$. We have
\begin{equation}\label{decomposition}
\frac{d^2}{2k^2} + 2 \geq \frac{d_1^2}{2k_1^2} + \frac{d_2^2}{2k_2^2} + \frac{d_1\cdot d_2}{k_1^2} + 2
\geq \frac{d_1^2}{2k_1^2} + \frac{d_2^2}{2k_2^2} + \frac{nk_2}{k_1} + 2. 
\end{equation} 
If $n \geq 2$, then
$$
\frac{d_1^2}{2k_1^2} + \frac{d_2^2}{2k_2^2} + \frac{nk_2}{k_1} + 2 \geq 
(\frac{d_1^2}{2k_1^2} + 2) + (\frac{d_2^2}{2k_2^2} + 2),
$$ 
and the number of connected components of $\R D$ is at most $d^2/2k^2 + 2$. 
If $n = 1$, then 
$$
\frac{d_1^2}{2k_1^2} + \frac{d_2^2}{2k_2^2} + \frac{nk_2}{k_1} + 2 \geq 
(\frac{d_1^2}{2k_1^2} + 2) + (\frac{d_2^2}{2k_2^2} + 2) - 1. 
$$ 
In this case, the only intersection point of $D_1$ and $D_2$ is real,
and the number of connected components of $\R D$ is at most
$$
(\frac{d_1^2}{2k_1^2} + 2) + (\frac{d_2^2}{2k_2^2} + 2) - 1 \leq \frac{d^2}{2k^2} + 2.
$$ 
\end{proof} 

We say that a real algebraic curve $D \subset X$ is {\em $r$-maximal}
if $D$ is either connected or consists of two connected non-real conjugated curves,
and the number of connected components of the real part $\R D$ of $D$
is equal to $d^2/2k^2 + 2$, where $d = [D] \in H_2(X; \Z)$ and $k$ is the multiplicity of $D$. 

\begin{lem}\label{maximal_decomposition}
Let $D = D_1 \cup D_2 \subset X$ be an $r$-maximal real algebraic curve,
where $D_1$ and $D_2$ are real curves such that each of them
is either connected or consists of two connected non-real conjugated curves.
Then, the curves $D_1$ and $D_2$ are $r$-maximal and have the same multiplicity.
\end{lem}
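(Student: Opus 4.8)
The plan is to combine the hypothesis that $D$ is $r$-maximal with the chain of inequalities established in the proof of Lemma~\ref{agenus}, and observe that $r$-maximality forces every inequality used along the way to be an equality. First I would set $d_i = [D_i]$, $k_i$ the multiplicity of $D_i$, and $k = \min(k_1,k_2)$ the multiplicity of $D = D_1\cup D_2$ (say $k = k_1 \le k_2$), and let $n$ be the number of intersection points of $D_1$ and $D_2$. By Lemma~\ref{agenus} the number of connected components of $\R D_i$ is at most $d_i^2/2k_i^2 + 2$, and since $\R D = \R D_1 \sqcup \R D_2$ minus identifications at real intersection points, the number of components of $\R D$ is at most $(d_1^2/2k_1^2 + 2) + (d_2^2/2k_2^2 + 2) - (\text{number of real intersection points})$. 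The $r$-maximality hypothesis says this count equals $d^2/2k^2 + 2$.

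Next I would run through display~\eqref{decomposition} from the proof of Lemma~\ref{agenus}: we have
\[
\frac{d^2}{2k^2} + 2 \geq \frac{d_1^2}{2k_1^2} + \frac{d_2^2}{2k_2^2} + \frac{nk_2}{k_1} + 2,
\]
with the two successive inequalities coming from $d^2 = d_1^2 + d_2^2 + 2d_1\cdot d_2$ together with $k^2 = k_1^2$, and from $d_1\cdot d_2 \ge nk_1k_2$ (each of the $n$ intersection points, counted on the reduced curves, is weighted by at least $k_1 k_2$). Combined with the case analysis on $n$ (the cases $n \ge 2$, $n = 1$, and one should also treat $n = 0$, which is the cleanest), $r$-maximality of $D$ forces: (i) $nk_2/k_1 = n$, hence if $n \ge 1$ then $k_1 = k_2$, so $D_1$ and $D_2$ have the same multiplicity; (ii) the number of components of $\R D_i$ equals exactly $d_i^2/2k_i^2 + 2$ for $i = 1, 2$, i.e. both $D_i$ are $r$-maximal; (iii) when $n = 1$ the single intersection point is real, and when $n \ge 2$ one gets a contradiction with equality unless the excess terms vanish, which pins down $n \le 1$. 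I would handle the $n = 0$ case separately and quickly: then $d^2/2k^2 + 2 \ge (d_1^2/2k_1^2 + 2) + (d_2^2/2k_2^2 + 2) \ge $ the component count, and $r$-maximality forces $d_1^2/2k_1^2 + 2$ and $d_2^2/2k_2^2 + 2$ to both be $\le 0$ or one of them to vanish — more carefully, equality in the first inequality needs $d_2^2/2k_2^2 + 2 = 0$ and $k_1 = k_2$, which one checks is consistent only in degenerate situations; I would spell out that in the relevant geometric setting $n \ge 1$ always holds so this case is vacuous, or argue directly that it still yields the two multiplicities equal.

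The main obstacle I anticipate is bookkeeping the reduced-versus-non-reduced subtlety cleanly: the multiplicities $k_i$ enter through $D_i = k_i D_i'$ with $D_i'$ reduced, and the inequality $d_1 \cdot d_2 \ge n k_1 k_2$ relies on intersection points of the reduced curves being counted with the correct local multiplicity, so I need to make sure the $n$ appearing in~\eqref{decomposition} is genuinely the geometric count and that equality $d_1\cdot d_2 = nk_1k_2$ forces the reduced curves $D_1'$ and $D_2'$ to meet transversally at exactly $n$ points. Once that is in place, the argument is essentially a rigidity statement: equality in a sum of inequalities forces equality in each summand, and reading off the conclusion is routine. I would present it as: assume toward the conclusion and trace equality back through each step of the proof of Lemma~\ref{agenus}, concluding first $k_1 = k_2$ (hence equal multiplicities) and then that the per-component bounds are sharp (hence $D_1, D_2$ are $r$-maximal), with the $n = 0$ and $n = 1$ cases noted explicitly.
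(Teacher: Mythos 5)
Your proposal is correct and takes essentially the same route as the paper: the paper's proof likewise consists of observing that $r$-maximality of $D$ forces equality throughout the chain of inequalities \eqref{decomposition} from the proof of Lemma~\ref{agenus}, yielding $k_1=k_2$ and the $r$-maximality of $D_1$ and $D_2$. The only difference is that the paper states this in one line, whereas you spell out the equality-tracing and the case analysis on $n$.
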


\begin{proof}
Put $d_i = [D_i] \in H_2(X; \Z)$, $i = 1$, $2$,
and denote by $k_i$ 
the multiplicity of $D_i$, $i = 1$, $2$. 
Denote by $n$ the number of intersection points of $D_1$ and $D_2$.
The $r$-maximality of $D$ and the inequalities (\ref{decomposition}) 
imply that $k_1 = k_2$ and the curves $D_1$ and $D_2$ are $r$-maximal.
\end{proof}

Recall that in the linear system of genus $g$ polarizing the K3-surface $X$ 
we may choose a curve passing through arbitrary $g$ points. 
Let $C$ be the real curve passing through $g$ points on $g$ distinct spherical 
components $\Sigma_1$, $\ldots$, $\Sigma_g$ of $\R X$.
%(including the one with the maximal area). 
Thus, $\R C$ contains at least $g$ components $L_1$, $\ldots$, $L_g$ 
at these spherical components. 
Slightly perturbing the curve $C$ if needed we may assume that $C$ is smooth.
Since the polarization is non-contractible, the real locus $\R C$ must also 
contain a non-contractible component $L_0\subset\R C$ at the non-contractible 
component
$N\subset\R X$.
Thus, $C$ is a simple Harnack curve. 

\begin{lem}\label{reduced}
Let $C' \in \R|C|$ be a connected curve 
%having a real component on 
intersecting each connected component
$\Sigma_1$, $\ldots$, $\Sigma_g$. If $g \geq 2$, the curve $C'$ is reduced.
%Furthermore, any singular point $x \in C'$ belonging to a spherical component
%of $\R X$ is solitary double. 
\end{lem}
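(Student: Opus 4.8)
The plan is to argue by contradiction using the genus/multiplicity bound from Lemma \ref{agenus}. Suppose $C'\in\R|C|$ is connected, meets each of the $g$ spherical components $\Sigma_1,\dots,\Sigma_g$, but is not reduced. Since $C'$ lies in the polarizing linear system we have $[C']=[C]=d$ with $d^2=2g-2$. Because $C'$ is non-reduced, its multiplicity $k$ (in the sense defined just before Lemma \ref{agenus}) satisfies $k\ge 2$. Lemma \ref{agenus} (with $D=C'$, which is connected, hence allowed) then bounds the number of connected components of $\R C'$ by $2+d^2/(2k^2)=2+(g-1)/k^2\le 2+(g-1)/4$. On the other hand $\R C'$ meets $g$ pairwise distinct spherical components $\Sigma_1,\dots,\Sigma_g$, and each $\Sigma_i$ is a sphere, so a connected component of $\R C'$ cannot lie in two different $\Sigma_i$'s; hence $\R C'$ has at least $g$ connected components. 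Thus $g\le 2+(g-1)/4$, i.e. $4g\le 8+g-1=g+7$, i.e. $3g\le 7$, i.e. $g\le 2$. So for $g\ge 3$ the contradiction is immediate.

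The remaining case $g=2$ needs a little more care, since then the numerical bound reads $g=2\le 2+(g-1)/k^2=2+1/k^2$, which is not yet a contradiction. Here I would use that $d^2=2g-2=2>0$ forces the non-reduced structure to be concentrated: write $C'=\sum m_i C'_i$ on its reduced irreducible (over $\R$) components; if \emph{every} component had multiplicity $\ge 2$ then $[C']=2E$ for an effective class $E$ with $E^2=d^2/4=1/2\notin\Z$, which is absurd, so $C'$ has at least one simple (multiplicity $1$) component. Hence $k=1$ — contradicting the assumption that $C'$ is non-reduced — unless $C'$ is \emph{reducible}, with at least one simple and at least one multiple component. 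In that case I split $C'=D_1\cup D_2$ with $D_1$ the union of the simple components and $D_2$ the union of the multiple ones (choosing the splitting so each $D_i$ is a union of conjugate pairs or connected pieces), apply Lemma \ref{agenus} to each, and run the estimate \eqref{decomposition}: here $k_1=1<k_2=k$, so the multiplicity of $C'$ is $k_1=1$, again contradicting non-reducedness. Actually this last observation is cleaner and subsumes the $g=2$ discussion: \emph{any} non-reduced $C'$ with $[C']^2=2>0$ must contain a simple component (else $[C']\in 2H_2(X;\Z)$ and $[C']^2\in 8\Z$, contradicting $[C']^2=2$), so its multiplicity is $1$, a contradiction; hence $C'$ is reduced. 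I will probably present it in this streamlined form, invoking $d^2=2g-2=2$ and the divisibility of the self-intersection form.

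The main obstacle is making sure the definition of ``multiplicity'' interacts correctly with the dichotomy ``reduced vs.\ not'': a curve of multiplicity $k\ge 2$ is by definition one in which \emph{every} irreducible component appears with multiplicity $\ge k\ge 2$, so being non-reduced does \emph{not} by itself give $k\ge 2$ — a curve with one simple and one double component is non-reduced but has multiplicity $1$. So the clean logical route is: if $C'$ is non-reduced and has multiplicity $1$, then $C'$ has a simple component and Lemma \ref{maximal_decomposition}-style splitting immediately contradicts nothing directly — rather, the point is that then $[C']=k[D']+(\text{simple part})$, and one must rule out the configuration numerically. I should double-check that the only genuinely dangerous case is $C'=2E$ (all components doubled, equal multiplicities), which is killed by $E^2=1/2\notin\Z$, and that every other non-reduced configuration has a simple component hence multiplicity $1$, which then—together with $\R C'$ having $\ge g$ components and Lemma \ref{agenus} giving $\le 2+(g-1)/1 = g+1$ components—is consistent, so that bound alone is \emph{not} enough and I genuinely need the ``simple component'' argument to force reducedness, not just the counting. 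Let me therefore phrase the final proof as: non-reduced $\Rightarrow$ either $[C']\in 2H_2(X;\Z)$ (impossible since $[C']^2=2$) or $C'$ has a simple component; in the latter case $C'=D_1\cup D_2$ with $D_1$ the reduced union of simple components and $D_2$ the non-reduced remainder, and since $D_1\cup D_2$ is connected with $[C']^2=2$, the estimate in the proof of Lemma \ref{agenus} applies and forces the multiplicity of $C'$ to be $1$ with $\R C'$ having at most $g+1$ components — which is still too weak, so the real content is to observe that a connected non-reduced curve meeting $g\ge 2$ distinct spheres must split off its simple part in a way that, combined with the connectedness constraint and $\R C'$ meeting $g$ spheres, yields the contradiction. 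I expect the referee-level subtlety to be exactly this bookkeeping, and I would allocate most of the write-up to it.
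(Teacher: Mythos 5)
There is a genuine gap, and you in fact flag it yourself at the end: your argument establishes at best that $C'$ has multiplicity $1$, which (as you correctly observe) does not imply that $C'$ is reduced, and you never close that gap. The two ingredients you are missing are the following. First, you only count the $g$ ovals of $\R C'$ lying in the spheres $\Sigma_1,\dots,\Sigma_g$, but $C'$ necessarily also meets the non-spherical component $N$: the polarization is non-contractible, so $[\R C']=[\R C]\neq 0\in H_1(\R X;\Z_2)$, and this class cannot be carried by the spherical components. Hence $\R C'$ has at least $g+1$ connected components, which is \emph{exactly} the upper bound $2+d^2/2k^2=2+(g-1)/k^2$ of Lemma \ref{agenus} when $k=1$ (and exceeds it when $k\ge 2$). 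This does two things at once: it forces $k=1$ for all $g\ge 2$ without any case split, and it shows that $C'$ is $r$-maximal in the sense defined after Lemma \ref{agenus}.

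Second, $r$-maximality is precisely the tool that upgrades ``multiplicity $1$'' to ``reduced'': by Lemma \ref{maximal_decomposition}, in any splitting $C'=D_1\cup D_2$ of an $r$-maximal curve the pieces are again $r$-maximal \emph{and have equal multiplicities}; applying this recursively down to the irreducible components shows that every irreducible component of $C'$ occurs with the same multiplicity as the minimum, namely $1$. That is the bookkeeping you anticipated would be the hard part, and it is exactly what Lemma \ref{maximal_decomposition} was set up to provide. Your auxiliary divisibility argument ($[C']=2E$ would give $E^2=1/2$) only excludes the case where \emph{every} component is multiple, and your proposed splitting into simple and multiple parts, as you admit, ``is still too weak''; without the extra component on $N$ and the equal-multiplicity conclusion of Lemma \ref{maximal_decomposition}, the mixed case remains open.
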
 

\begin{proof}
Note that 
%$C'$ 
%has 
%intersects at least $g + 1$ 
%real 
%connected components of $\R X$, 
%since 
%it 
$C'$ necessarily %has a real component on 
intersects $N$. In addition, $[C']^2 = [C]^2 = 2g - 2 > 0$ (since $g \geq 2$). 
Thus, 
%Lemma \ref{agenus} implies that the real part $\R C'$ of $C'$ 
%has exactly $g + 1$ connected components. 
$C'$ is $r$-maximal and of multiplicity $1$. 
Lemma \ref{maximal_decomposition} implies that all irreducible components of $C'$ 
are of multiplicity $1$, that is, $C'$ is reduced.  
\end{proof} 

Assume that $g \geq 2$. Choose a complex orientation of $\R C$.
For every $j = 1$, $\ldots$, $g$, 
the connected component $\Sigma_j$ is oriented by the 2-form $\Omega$
and is divided by the oval $L_j$ in two disks 
$\Sigma^+_j$ and $\Sigma^-_j$, where $\Sigma^+_j$ is the disk whose boundary orientation
agrees with the chosen complex orientation of $\R C$.
Denote by $s^+_j$ and $-s^-_j$ the areas of the disks $\Sigma^+_j$ and $\Sigma^-_j$.

Lemma \ref{reduced} and Propositions \ref{hK3}, \ref{inDelta} 
imply that the inverse image of the line 
\[
\{ (\frac{s_1^++s_1^-}2,\dots,\frac{s_{g-1}^++s_{g-1}^-}2,u)\ |\ u\in\R\}\subset\R^g
\]
under the 
%diffeomorphism \eqref{toDelta}.
map 
$$I_{\R C}:\widetilde{\R\MM^i_C}\to \R^g$$  
is a segment $\SSS \subset \widetilde{\R\MM^i_C}$ 
whose closure in $\widetilde{\R\MM^a}$ %\mnote{It: `closure in $\widetilde{\R\MM^a}$'} 
has two extremal points 
%two ends 
corresponding to 
%two 
nodal 
%degenerations 
curves $C_+$ and $C_-$; 
%of simple Harnack curves;
each of the curves $C_+$ and $C_-$ has a solitary double point in $\Sigma_g$. 
%whose only singularity is an isolated node at $S$ by Corollary \ref{cK3}.
Furthermore, according to Proposition \ref{non-m-smooth},
the curves $C_+$ and $C_-$ do not have other singular points 
in $\R X \setminus N$, and any curve corresponding to a 
%non-extremal 
point of $\SSS$ %\mnote{It: `non-extremal' removed} 
does not have singular points in $\R X \setminus N$. 
Let $D'$ and $D''$ be any two curves corresponding to distinct points of $\SSS$. 
The ovals of $D'$ and $D''$ at the spherical components $\Sigma_1$, $\ldots$, $\Sigma_{g - 1}$
divide
the corresponding spheres into disks of equal areas,
so they intersect at least at $2(g-1)$ points at these components. 
By the B\'ezout theorem, $D' \cap D'' \cap (N \cup \Sigma_g)=\emptyset$.
Thus, $(C_+\cap N)\cup(C_-\cap N)$ bounds a proper compact subsurface of $N$ of area
$s_g^+-s_g^-=\ar(\Sigma_g)$. 
%which implies $\ar(N)>\ar(S)$. 

%Lemma \ref{non-intersect} implies that $N$ contains a proper compact region whose area
%is equal to the area of $\Sigma_g$. For a sufficiently small real number $\epsilon > 0$,
%corresponding curve $C_\varepsilon$ and paths $\gamma^\pm_\varepsilon$, 
%%and $\gamma^-$,
%such a region $R_g$ is the union of the real components in $N$ of the curves $\gamma^+_\varepsilon(t)$, $t \in [0, \tau^+]$, 
%and $\gamma^-_\varepsilon(t)$, $t \in (0, \tau^+]$. 

Similarly, for every connected component $\Sigma \subset \R X$ 
different from $\Sigma_1$, $\ldots$, $\Sigma_{g - 1}$, $N$, there is
a proper compact subsurface $R \subset N$ whose area
is equal to the area of $\Sigma$. 
The B\'ezout theorem implies that all these subsurfaces $R$ are pairwise disjoint.
This proves the statement of the theorem in the case $g \geq 2$. 

Assume now that $g = 1$. In this case, the linear system $|C|$ is $1$-dimensional, 
$\R |C| \simeq \rp^1$, and the previous arguments can be easily adapted.
%The curves of $|C|$ are pairwise disjoint.
Through any point of $X$ one can trace a unique curve belonging to $|C|$. 
This defines a projection $\pi_\R: \R X \to \rp^1$. 
Note that $\pi_\R(N)$ coincides with $\rp^1$. 
The image under $\pi_\R$ of any spherical component $\Sigma_j$ of $\R X$ is a closed segment,
and all such segments are pairwise disjoint. 
Each segment $\pi_\R(\Sigma_j)$ gives rise to a proper compact subsurface in $N$ 
(the intersection of $N$ with the inverse image under $\pi_\R$ of the segment)
whose area is equal to the area of $\Sigma_j$, 
and all these subsurfaces are pairwise disjoint.
\qed 

%\begin{rmk}
%The proof of Theorem \ref{main} uses minimal Harnack curves with an $m$-component
%which always exist under the hypotheses of the theorem.
%Proposition \ref{single} implies that the topology of such curves is very restricted: 
%no two ovals of the same curve may belong to the same component of $\R X$.
%In particular, the number of components of such curves is not greater than 10,
%and thus its genus is not greater than 9.
%%%It might be interesting to study other Harnack curves on K3-surfaces.
%%Below we formulate a few open (to the best of our knowledge) questions 
%%on existence of other simple Harnack curves.
%%% that are either non-minimal, or
%%%without am $m$-component.
%\end{rmk} 

%\vskip10pt 

\section{Simple Harnack curves in K3 surfaces: further directions and questions}\label{simple-Harnack}
For the proof of Theorem \ref{main} we have used simple Harnack curves $C\subset X$
of rather special type: each component of $\R X$ contained not more than one
component of the curve $\R C$. Under this assumption a real curve is a simple Harnack
whenever it is an M-curve.

We finish the paper by taking a look at more general simple Harnack curves. 
Namely, we assume that $C\subset X$ is a simple Harnack curve,
and $X$ is a real K3-surface which is not hyperbolic.
Then the locus of the M-curve
$\R C$ has not more than one $m$-component.
We order the components $L_j$, $j=0,\dots,g$ of $\R C$ so that
all of them, except possibly $L_0$, are not $m$-components.
%All components of $\R C$ except possibly for $L_0$ are 
%not $m$-components, and 
Thus the map $I_{\R C}$ from Proposition \ref{inDelta} is well-defined.
%For the rest of the paper we consider the situation when
%a simple Harnack curve cannot degenerate to a reducible curve.

Note that for a degeneration $C_t$, $t\in [0,1]$, of simple Harnack curves such that $C_1=C$,
any curve $C_t$, $t>0$, is naturally identified with a point in $\widetilde{\R\MM^i_C}$.

\begin{defn}\label{def-min}
A simple Harnack curve $C\subset X$ is called {\em minimal}
if for any degeneration  $C_t$, $t\in [0,1]$, of simple Harnack curves  such that $C_1=C$
%continuous family $C_t\subset X$, $t\in [0,1]$, of curves
%from $|C|$ such that $C_1=C$, $C_t$, $t>0$, is a simple Harnack curve and
and $\lim\limits_{t\to 0} I_{\R C}(C_t)\in \Delta$, %\mnote{It: what is $I_{\R C}(C_t)$?} 
the curve $C_0$ is reduced and irreducible.
\end{defn}

\begin{exa}
%If $C \subset X$ is an M-curve of genus $g \geq 2$ such that
%every connected component of $\R X$ contains at most one connected
%component of the real part $\R C$ of $C$, Lemma \ref{reduced} implies
%that $C$ is a minimal simple Harnack curve.
%
If $X$ does not contain embedded curves of genus less than 
%the genus $g$
$g \geq 2$  
(in particular, it does not contain $(-2)$-curves), then any simple Harnack curve
of genus $g$ is a minimal simple Harnack curve.
%\mnote{Slava asks for a proof} 
\end{exa}

%\begin{rmk}
%Note that non-minimal simple Harnack curves exist. For example, a smooth real quartic surface in $\pp^3$
%may admit a plane tangent to 4 distinct real components.  
%\end{rmk}

\ignore{
\begin{exa}
Suppose that all components of the real locus $\R C$ of a simple Harnack curve $C\subset X$ of genus $g$
are contractible. Suppose in addition that $X$ does not have rational curves disjoint from $\R X$
and that the intersection number of any two embedded real curves $S_1,S_2\subset X$ with $\R S_1\cap\R S_2=\emptyset$ is greater than 2. Then $C$ is a minimal simple Harnack curve.

Indeed, by the proof of Proposition \ref{hK3}, since $\R C$ is contractible,
the components of its degeneration cannot intersect at real points.
Since the genus of $C\setminus\R C$ is zero,..
If at least one branch at an imaginary double point of the degeneration 
corresponds to an imaginary component then this component must be rational
without 
\end{exa}
}
\ignore{
\begin{lem}\label{lem-appro}
Any curve $C'\in \widetilde{\R\MM^i_C}$ may be approximated by a simple
Harnack curve.
\end{lem}
\begin{proof}
Let $C$ be a simple Harnack curve in $X$. 
By definition of $\widetilde{\R\MM^i_C}$, there is a real deformation
$\gamma$ of $C$ to $C'$ within the class of nodal curves.
By Proposition \ref{non-m-smooth}, the components
$L_1,\dots,L_g$ remain smooth under $\gamma$.
If the component $L_0$ also remains smooth, then the real loci of 
the curves in the deformation $\gamma$ remain the same topologically.
Thus all smooth curves in $\gamma$ are simple Harnack.
If $L_0$ develops a singular point under $\gamma$, then by Proposition \ref{hK3}
this point must be a node corresponding to transversal intersection of two components
of a reducible curves, This implies that $L_0$ is homologically non-trivial in the
component $\Sigma_0\subset\R X$ containing $L_0$. Furthermore, in this case, 
$\Sigma_0$ may not contain components $L_j$, $j=1,\dots,g$.
Therefore, all smooth curves in $\gamma$ have a unique real component in $\Sigma_0$
and thus are simple Harnack curves.
\end{proof}
}

\begin{prop}\label{prop-diffeo}
If $C$ is a minimal simple Harnack curve, then the map
\begin{equation}\label{toDelta}
I_{\R C}:\widetilde{\R\MM^i_C}\to\Delta 
\end{equation}
from Proposition \ref{inDelta} is a diffeomorphism.
\end{prop}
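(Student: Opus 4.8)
The plan is to upgrade the conclusion of Proposition \ref{inDelta} — that $I_{\R C}$ is a local diffeomorphism onto a subset of $\Delta$ — to a global diffeomorphism onto $\Delta$, using minimality as the key input. First I would establish injectivity. If $D', D'' \in \widetilde{\R\MM^i_C}$ have $I_{\R C}(D') = I_{\R C}(D'')$, then for each $j = 1, \dots, g$ the ovals of $D'$ and $D''$ at the spherical component $\Sigma_j$ (or the $m$-component handling for $L_0$) cut off subsurfaces of equal $\Omega$-area, hence must intersect in at least two points over each such component; summing gives at least $2g$ intersection points among the contractible ovals. But $[D'] \cdot [D''] = [C]^2 = 2g-2$, so by the Bézout theorem (as used at the end of the proof of Theorem \ref{main}) the curves $D'$ and $D''$ cannot meet anywhere else, and in particular their non-contractible parts coincide; tracking the deformation data $[\gamma]$ through $\widetilde{\R\MM^i_C}$ then forces $D' = D''$ as points of the universal cover. (One must be slightly careful with the $g=1$ case, where the Bézout count degenerates, but there the map is to $\R^1$ and the argument via the projection $\pi_\R$ from the proof of Theorem \ref{main} applies directly.)

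Next I would prove that the image is all of $\Delta$. Since $I_{\R C}$ is a local diffeomorphism and $\Delta$ is connected, it suffices to show the image is closed in $\Delta$, {\em i.e.}, that no sequence in $\widetilde{\R\MM^i_C}$ whose images converge to a point of $\Delta$ can escape to the boundary of $\widetilde{\R\MM^a}$ without its limit lying again in $\widetilde{\R\MM^i_C}$. Here is exactly where minimality enters: given a path in $\widetilde{\R\MM^i_C}$ along which $I_{\R C}$ converges to $v \in \Delta$, reparametrize it as a degeneration $C_t$ of simple Harnack curves with $C_1 = C$ and $\lim_{t\to 0} I_{\R C}(C_t) = v \in \Delta$. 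By Definition \ref{def-min} the limit curve $C_0$ is reduced and irreducible; by Corollary \ref{cK3} its only singularities are solitary nodes; but by Remark \ref{solitary_node} curves represented by points of $\widetilde{\R\MM^i_C}$ (and their limits within this locus) have no real solitary nodes, while Proposition \ref{non-m-smooth} and the description of $\Delta$ (the strict inequalities $s_j^- < x_j < s_j^+$, which exclude the degenerations at the spherical components) rule out the relevant singular degenerations. Hence $C_0$ is itself smooth and defines a point of $\widetilde{\R\MM^i_C}$ mapping to $v$. Therefore the image is open and closed in $\Delta$, hence equals $\Delta$.

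Finally, combining: $I_{\R C}$ is a local diffeomorphism, injective, and surjective onto $\Delta$, hence a diffeomorphism onto $\Delta$, which is \eqref{toDelta}.

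The main obstacle I anticipate is the properness/closedness step — specifically, ruling out \emph{every} way a family of simple Harnack curves with bounded (interior) $I_{\R C}$-values could degenerate badly. The subtlety is that minimality is stated as a hypothesis about degenerations with $\lim I_{\R C} \in \Delta$, so one must verify that an arbitrary escaping sequence in $\widetilde{\R\MM^i_C}$ can genuinely be realized as such a degeneration (including controlling the homotopy-of-paths data defining points of the universal cover $\widetilde{\R\MM^a}$, and checking that the limit stays within the nodal locus so that Proposition \ref{hK3} and Corollary \ref{cK3} apply). Once that reduction is in place, the cited results close the argument; injectivity, by contrast, is a relatively clean Bézout-plus-area computation modeled on the endgame of the proof of Theorem \ref{main}.
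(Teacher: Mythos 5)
Your surjectivity step is, in essence, the paper's entire proof: the paper observes that $\Delta$ is simply connected (it is a convex open box), that $I_{\R C}$ is a local diffeomorphism from the connected manifold $\widetilde{\R\MM^i_C}$ by Proposition~\ref{inDelta}, and that therefore it suffices to prove properness; properness is then extracted from minimality exactly along the chain you describe (Definition~\ref{def-min} gives a reduced irreducible limit, Corollary~\ref{cK3} leaves only solitary nodes, and those force $I_{\R C}$ to the boundary of $\Delta$, so a degeneration with limit of $I_{\R C}$ in $\Delta$ has a smooth limit curve). You have also correctly flagged the one point the paper glosses over, namely converting an arbitrary escaping sequence into a degeneration of simple Harnack curves to which Definition~\ref{def-min} applies.

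The genuine problem is your injectivity step, which is both unnecessary and not sound in the generality of this section. It is unnecessary because a proper local diffeomorphism onto a simply connected manifold from a connected source is a covering map of degree one, so injectivity comes for free once properness is established --- this is why the paper never argues injectivity. It is unsound because the B\'ezout-plus-area count is borrowed from the proof of Theorem~\ref{main}, where each $L_j$, $j=1,\dots,g$, is a single contractible oval on a distinct sphere and the $I$-value is pinned to the midpoint $(s_j^++s_j^-)/2$, so that two distinct bisecting ovals must meet in at least two points. Here none of that is available: a non-$m$-component $L_j$ may be non-contractible (as in Example~\ref{exa2hom}, where two homologous non-contractible components lie on the torus), and then ``equal $I$-value implies two intersection points'' simply fails; several ovals may lie on one component of $\R X$ (Propositions~\ref{nest} and~\ref{single}, which constrain this, are proved \emph{after} and \emph{using} Proposition~\ref{prop-diffeo}, so you cannot invoke them); points of $\widetilde{\R\MM^i_C}$ may be reducible nodal curves sharing components, which invalidates the B\'ezout count; and even for two contractible ovals on a sphere, cutting off positively oriented disks of equal but non-half area does not force them to intersect. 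Replace the injectivity paragraph by the covering-space observation and your proof coincides with the paper's.
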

\begin{proof}
Since $\Delta$ is simply connected and $I_{\R C}$ is a local diffeomorphism by Proposition \ref{inDelta},
it suffices to prove that \eqref{toDelta} is proper.
The space $\widetilde{\R\MM^i_C}$ is an open manifold covering a subset of $|C|$. 
By Proposition \ref{hK3},  for any degeneration $C_t$, $t\in [0,1]$,
of simple Harnack curves such that $C_1=C$ and $\lim\limits_{t\to 0} I_{\R C}(C_t)\in\Delta$,
%from
%$\widetilde{\R\MM^i_C}$,
the limit curve $C_0$ is smooth.
%if $I_{\R C}(C_0)\in\Delta$.\mnote{It: we have a problem with terminology here, because simple Harnack curves
%are not elements of $\widetilde{\R\MM^i_C}$} 
\ignore{
%By Proposition \ref{hK3}, 
If \eqref{toDelta} is not proper, then there exists a degeneration $C_t$, $t\in [0,1)$,
of simple Harnack curves 
from
$\widetilde{\R\MM^i_C}$
such that $\lim\limits_{t\to 0} I_{\R C}(C_t)\in \Delta$ but with $C_0\notin\widetilde{\R\MM^i_C}$.
%Note that all curves from $\widetilde{\R\MM^i_C}$ can be approximated by simple Harnack curves,
%as smoothing the 
Definition \ref{def-min} implies that $C_0$ is reduced.
Then
Proposition \ref{hK3} implies that $C_0\in \widetilde{\R\MM^i_C}$.
}
\end{proof}

%In this subsection we assume that $C\subset X$ (with $\R C=L_0\cup L_1\cup\dots\cup L_g$)
%is a minimal simple Harnack curve.

%If $[L_j]=0\in H_1(\R X;\Z_2)$ then it divides the component $\Sigma_j\subset\R X$
%into 
We say that the components $L_{j_-}, L_{j_0}, L_{j_+}$ {\em nest} if
they are contractible ({\em i.e.}, $[L_{j_-}]=[L_{j_0}]=[L_{j_+}]=0\in H_1(\R X;\Z_2)$),
belong to the same component  $\Sigma_j\subset\R X$, and
one component of $\Sigma_j\setminus L_{j_0}$ contains $L_{j_-}$ while the
other one contains $L_{j_+}$, see Figure \ref{fig-nest}.
\begin{figure}[h]
\includegraphics[height=45mm]{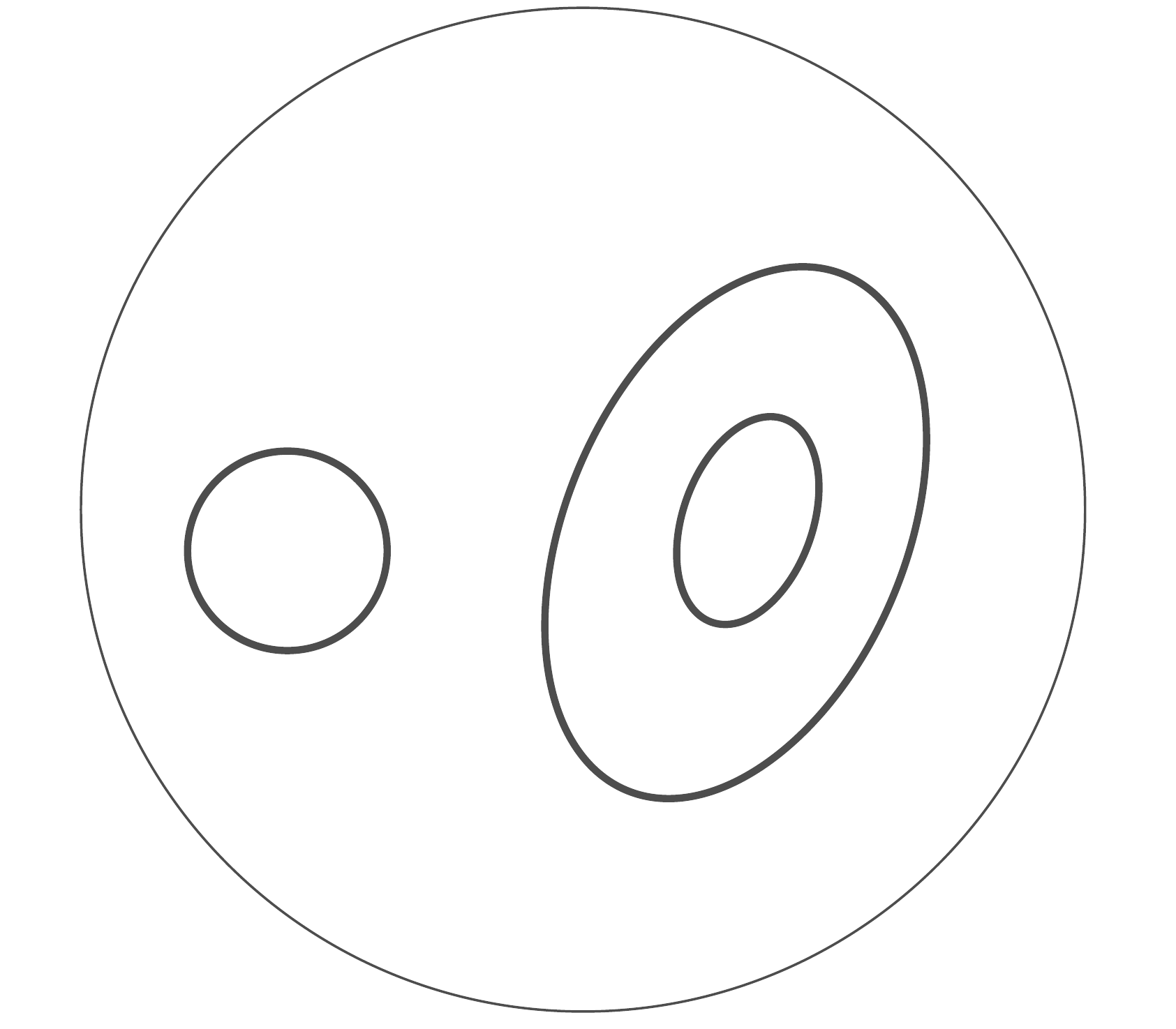}%\vspace{-10pt}
\caption{Three nesting ovals in a spherical component of $\R X$.
\label{fig-nest}}
\end{figure}

\begin{prop}\label{nest}
No three components of a minimal simple Harnack curve $C$ can {nest}.
\end{prop}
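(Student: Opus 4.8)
The statement to prove is Proposition \ref{nest}: no three components of a minimal simple Harnack curve $C$ can nest. The strategy is to argue by contradiction using the diffeomorphism $I_{\R C}\colon\widetilde{\R\MM^i_C}\to\Delta$ from Proposition \ref{prop-diffeo} and the area-positivity constraints built into the definition of $\Delta$. Suppose $L_{j_-},L_{j_0},L_{j_+}$ nest, all lying in a spherical component $\Sigma_j=\Sigma$. Recall the quantities $s^\pm_{j_-},s^\pm_{j_0},s^\pm_{j_+}$ are the signed areas of the two disks cut off by each oval; since all three ovals are contractible and lie in the same sphere $\Sigma$, their disk areas are constrained: if $L_{j_-}$ lies in the $\Sigma^-_{j_0}$-side and $L_{j_+}$ in the $\Sigma^+_{j_0}$-side, then the disk $\Sigma^+_{j_-}$ (the one containing, eventually, the other ovals' interiors or not, depending on orientation bookkeeping) nests the disk $\Sigma^+_{j_0}$, etc. The key point is that these nesting relations among disks impose, for every point $(x_1,\dots,x_g)$ in the image of $I_{\R C}$, inequalities among the coordinates $x_{j_-},x_{j_0},x_{j_+}$ that are \emph{strictly stronger} than the defining inequalities $s^-_k<x_k<s^+_k$ of $\Delta$ — because the moving ovals $L_k(t)$ in a degeneration must themselves remain nested (two ovals in the same sphere coming from a connected M-curve cannot cross without creating a real node, which Remark \ref{solitary_node} forbids, or a non-real node, which by Proposition \ref{hK3} would force reducibility, contradicting minimality via Definition \ref{def-min}).

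**Key steps, in order.** First I would set up the local picture near $\Sigma$: order the three ovals so that in $\Sigma$ we have the configuration of Figure \ref{fig-nest}, and translate "nest" into a chain of disk inclusions $\Sigma^{\epsilon_-}_{j_-}\subset\Sigma^{\epsilon_0}_{j_0}\subset\cdots$ for appropriate signs $\epsilon$, which gives area inequalities like $s^+_{j_-}<s^+_{j_0}<s^+_{j_+}$ (or the reversed chain on the other side), together with the compatibility $s^+_{j_0}-s^-_{j_0}=\ar(\Sigma)$ and similar for the others — and crucially $\ar(\Sigma^+_{j_-})+\ar(\Sigma)$-type relations relating the three. Second, I would show that along \emph{any} curve $C'$ corresponding to a point of $\widetilde{\R\MM^i_C}$, the three ovals $L_{j_-}(C'),L_{j_0}(C'),L_{j_+}(C')$ remain pairwise nested in the same cyclic order: they cannot become tangent or cross (that would be a real node, excluded by Remark \ref{solitary_node}, or would require passing through a singular configuration handled by Proposition \ref{hK3}), and they cannot escape $\Sigma$ (by Proposition \ref{non-m-smooth}, since none of them is an $m$-component). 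Third — the heart of the argument — I would extract from "remaining nested" a constraint on $I_{\R C}(C')=(x_1,\dots,x_g)$: the disk bounded by $L_{j_0}(C')$ on, say, the $+$ side, contains the disk bounded by $L_{j_+}(C')$ on that side, so $x_{j_0}<x_{j_+}$ strictly (after matching the sign conventions of $\Sigma^\pm_j$ to the complex orientation); similarly $x_{j_-}<x_{j_0}$. Hence the image of $I_{\R C}$ lies in the proper open subset $\{x_{j_-}<x_{j_0}<x_{j_+}\}\cap\Delta$, not all of $\Delta$. Fourth, I derive the contradiction: by Proposition \ref{prop-diffeo} the map $I_{\R C}$ is a diffeomorphism \emph{onto} $\Delta$, so its image is all of $\Delta$, which is not contained in that proper subset (one can plainly find a point of $\Delta$ with $x_{j_-}\ge x_{j_0}$, since the defining inequalities of $\Delta$ put no such ordering). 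This contradiction proves the proposition.

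**Main obstacle.** The delicate step is the third one: carefully matching the orientation/sign conventions ($\Sigma^+_j$ versus $\Sigma^-_j$, the complex orientation of $\R C$, and which side of $L_{j_0}$ contains $L_{j_-}$ versus $L_{j_+}$) so that the nesting of disks translates into the \emph{correct} strict inequality among the $x_j$'s — and verifying that this inequality is genuinely incompatible with membership in $\Delta$ rather than already implied by it. One has to be slightly careful because the complex orientation on three nested ovals in a sphere alternates in a prescribed way (this is exactly the alternation forced by Definition \ref{dH}, cf. Figure \ref{orient} and Remark \ref{rem-alt}), which fixes the signs $\epsilon_-,\epsilon_0,\epsilon_+$ and hence the direction of the chain of inequalities; getting this bookkeeping right is where the real content sits. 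A secondary point requiring care is ruling out that the ovals could "swap roles" along a long degeneration — but this is controlled because $\widetilde{\R\MM^i_C}$ is connected (it is diffeomorphic to the convex, hence connected, set $\Delta$), so the nesting order, being a discrete invariant that cannot change without crossing an excluded singular configuration, is constant on all of $\widetilde{\R\MM^i_C}$.
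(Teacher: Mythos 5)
Your proposal follows essentially the same route as the paper: both derive the contradiction by observing that the nesting of the three ovals imposes extra area-positivity constraints (on the annuli between consecutive ovals) that would confine the image of $I_{\R C}$ to a proper subset of $\Delta$, contradicting the surjectivity of the diffeomorphism from Proposition \ref{prop-diffeo}. The one caveat is that your strict inequality $x_{j_-}<x_{j_0}<x_{j_+}$ must carry constant offsets (e.g.\ $x_{j_0}-x_{j_+}<\ar(K_{j_+})$ for the annulus $K_{j_+}$ between $L_{j_0}$ and $L_{j_+}$ at the base curve) — as literally written it would already fail at the base point $0\in\Delta$ — but this is precisely the orientation bookkeeping you flag as the delicate step, and the paper carries it out concretely by following the segment $(0,\dots,0,-u,u)\subset\Delta$ until the annulus areas would become negative.
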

\begin{proof}
Passing to different nesting components if needed we may assume that
$L_{j_+}$ and $L_{j_0}$ (resp. $L_{j_+}$ and $L_{j_0}$) are
adjacent to the same component $K_{j_+}\subset\R X\setminus \R C$
(resp.  $K_{j_-}\subset\R X\setminus \R C$).
Then Definition \ref{dH} implies that the complex orientations of $L_{j_-}, L_{j_0}, L_{j_+}$ alternate.
Renumbering the components of $\R C$ if needed we may assume that $L_j^+=L_{g}$
and $L_j^-=L_{g-1}$. Also, we may assume that the boundary orientation of $\dd K_{j_+}$
induced by $\Omega$ agree with the complex orientation of $\R C$.

Consider the inverse image of the interval
\[
\{ (0,\dots,0,-u,u)\in \Delta\ |\ 0\le u< s^+_g\}\subset\Delta
\]
under the diffeomorphism \eqref{toDelta}.
It corresponds to the 
%curves from 
the elements of $\widetilde{\R\MM^i_C}$ such that the area of $K_{j_+}$
becomes smaller by $u$. For $$u>\ar(K_{j_+})+\ar(K_{j_-})<s^+_j$$ we get self-contradicting
conditions for the resulting smooth curve in $\Sigma_j$.
\end{proof}

\ignore{
in the target of the map \eqref{iRC}.
Since \eqref{iRC} is a local diffeomorphism, there exist $\epsilon>0$ such that
$R_{01}(\epsilon)$ can be lifted to $\widetilde{\R\MM_C}$, i.e.,
there exists a continuous map 
$
\tau:R_{01}(\epsilon)\to\widetilde{\R\MM_C}
$
such that $I_{\R C}\circ\tau$ is the identity map on $R_{01}(\epsilon)$.
Let
\[
T=\sup\{\epsilon>0\ |\ \exists \tau:R_{01}(\epsilon)\to \widetilde{\R\MM_C}:\ I_{\R C}\circ\tau=
\operatorname{Id}\}.
\]
Since $I_{\R C}$ is a local diffeomorphism,
the family $\tau(-t,t,0,\dots,0)$ with $t<T$ tending to $T$ must correspond
to a non-trivial (singular) degeneration of simple Harnack curves
$\tau(-t,t,0,\dots,0)$.
By Corollary \ref{cK3}, $T=\min\{-s_0^-,s_1^+\}$ as the 
only possible singularity of such degeneration may come from
contracting of an oval of $\tau(-t,t,0,\dots,0)$.
..
%If $\tau(-\epsilon,\epsilon,0,\dots,0)$ corresponds to a smooth curve then  

\end{proof}

\begin{thm}
The image of the map \eqref{iRC} is
\[
\Delta=
\]

\end{thm}

\begin{coro}
If $C\subset X$ is a simple Harnack curve that is not contractible
(i.e. $[\R C]\neq 0\in H_1(\R X)$)
then each component $\Sigma\subset\R X$ may contain not more 
than one component of a simple Harnack curve $\R C$.
\end{coro}
\begin{proof}
By Definition \ref{dH} the component of $\R X$ containing a non-contractible
component of $\R C$ may not contain any other component of $\R C$.
Suppose that $\Sigma$ is a component of $\R X$
and $L_0,L_1\subset\Sigma\subset\R X$ are two different contractible components of $\R C$
contained in a single component $\Sigma\subset\R X$.
..
If $\Sigma\setminus L_j$, $j=0,1$, is connected then $C$ is not simple Harnack
by Definition \ref{dH}. Indeed, in such case $L_j$ appears two times in the
boundary of the same component of $\Sigma\setminus L_j$ with
the opposite boundary orientations, so no orientation of $L_j$ can be extended
to the adjacent component of $\Sigma\setminus\R C$.

Thus $\Sigma\setminus L_j=\Sigma_j^+\cup\Sigma_j^-$ consists of two components,
both oriented by the 2-form $\Omega$.
Here we denote with $\Sigma_j^+$ the component whose boundary orientation
agrees with the chosen complex orientation of $\R C$.
By Definition \ref{dH} we may choose the complex orientation so that
$\Sigma_0^+\cap \Sigma_1^+$ is connected.

Consider the ray 
\[
R_{01}=\{(t,-t,0,\dots,0)\ |\ t\ge 0\}\subset H 
\]
in the target of the map \eqref{iRC}.

\end{proof}
}

%We list two more corollaries from Proposition \ref{prop-diffeo} restricting
%topology of minimal simple Harnack curves.

\begin{prop}\label{single}
If a minimal simple Harnack curve $C$ has an $m$-component,
%If $L_0\subset\R C$ is an $m$-component of a minimal simple Harnack curve
%$\R C$,
%\mnote{to write `If a minimal simple Harnack curve $C$ has an $m$-component'?} 
then each component $\Sigma\subset\R X$ contains not more than one
component of $\R C$.
\end{prop}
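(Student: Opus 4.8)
The plan is to argue by contradiction, combining the ``nesting is forbidden'' result (Proposition \ref{nest}) with a dimension/area argument coming from the diffeomorphism \eqref{toDelta} of Proposition \ref{prop-diffeo}. Suppose $C$ is a minimal simple Harnack curve possessing an $m$-component $L_0\subset N$, but some component $\Sigma\subset\R X$ contains two distinct components of $\R C$; since $\Sigma$ cannot be $N$ (the $m$-component uses up the only non-spherical component, as $X$ is not hyperbolic), $\Sigma$ is a sphere, and the two components are contractible ovals $L_i,L_j\subset\Sigma$. Because $\Sigma\cong S^2$, two disjoint embedded circles on it are automatically nested: one of them, say $L_j$, separates $\Sigma$ into two disks, one of which contains $L_i$. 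So I have at least two nested ovals in $\Sigma$; the obstacle is that Proposition \ref{nest} forbids only \emph{three} nested ovals, so I need to manufacture a third one, or else run a direct area argument for two.

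First I would set up the area bookkeeping exactly as in the proof of Proposition \ref{nest}: using Proposition \ref{prop-diffeo}, the map $I_{\R C}:\widetilde{\R\MM^i_C}\to\Delta$ is a diffeomorphism onto the open box $\Delta$, so I am free to move the period coordinate $\R I_{L_i}$ of the inner oval $L_i$ across its entire admissible interval $(s_i^-,s_i^+)$ while keeping all other period coordinates fixed, obtaining a family of minimal simple Harnack curves in which only the oval at $L_i$ moves inside $\Sigma$. Let $\Sigma=\Sigma_i^+\cup L_i\cup\Sigma_i^-$ be the splitting by $L_i$, with $\Sigma_i^+$ the disk not containing $L_j$ (so $L_j\subset\Sigma_i^-$), of areas $s_i^+>0$ and $-s_i^-$. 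As $\R I_{L_i}$ increases toward $s_i^+$, the disk $\Sigma_i^+$ is being shrunk to zero area; but the oval $L_i$ can only be pushed until it collides with the oval $L_j$, which sits a positive distance away inside $\Sigma_i^-$. More precisely, letting $u$ denote the amount of area transferred, once $u$ exceeds the area of the sub-disk of $\Sigma_i^-$ bounded between $L_i$ and $L_j$ (a positive quantity strictly less than $s_i^+ - s_i^-$, hence realizable as an interior value of the coordinate), the resulting smooth curve in $\Sigma$ would have to enclose $L_j$ on the ``wrong'' side — a self-contradicting configuration, exactly as in Proposition \ref{nest}. This shows that the corresponding value of $\R I_{L_i}$, although lying in the open interval $(s_i^-,s_i^+)$ that is a face of $\Delta$, cannot be in the image of the diffeomorphism \eqref{toDelta}, contradicting surjectivity.

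I expect the main obstacle to be making the phrase ``self-contradicting conditions for the resulting smooth curve in $\Sigma$'' fully rigorous: one must check that the limiting degeneration stays inside the class governed by Proposition \ref{hK3} (so that the only thing that can happen is an oval contracting to a solitary node, which is excluded by Remark \ref{solitary_node} along the path) and that the area of the annular region between $L_i$ and $L_j$ is indeed attained as an interior period value, i.e. is strictly positive and strictly less than $s_i^+-s_i^-$. Both follow from Proposition \ref{non-m-smooth} (the non-$m$-components $L_i,L_j$ stay smooth along the deformation, so no collision-as-node is permitted) together with the positivity of the areas $\ar(\Sigma_i^+)$, $\ar(\Sigma_i^-)$, $\ar$ of the two sub-disks cut off by $L_j$ from $\Sigma_i^-$. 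Once this is in place, the contradiction with Proposition \ref{prop-diffeo} completes the argument, and the proposition follows.
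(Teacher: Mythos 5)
Your proposal is correct and follows essentially the same route as the paper: since $L_0$ is an $m$-component, $\Delta$ is a full product of intervals, and one uses the diffeomorphism $I_{\R C}:\widetilde{\R\MM^i_C}\to\Delta$ of Proposition \ref{prop-diffeo} to push the period coordinate of one of two ovals in $\Sigma$ to the value $\pm\ar(K)$, where $K$ is a component of $\R X\setminus\R C$ adjacent to both ovals, obtaining the same ``self-contradicting configuration'' as in Proposition \ref{nest}. Only two cosmetic points: you should pick $L_i,L_j$ adjacent to a common component $K$ of the complement (so the region between them contains no further ovals), and note that the collision occurs when the moving oval sweeps \emph{toward} $L_j$, i.e.\ at the interior parameter value $u=\pm\ar(K)$ rather than as $\R I_{L_i}\to s_i^+$.
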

\begin{proof}
Let $L_0$ be the $m$-component.
%By the definition of the $m$-component the 
%If $\Sigma\supset L_0$, 
Then, the component $N\subset \R X$ containing $L_0$ is disjoint from the other components
of $\R C$ by the definition of the $m$-component.
Since $L_0$ is not contractible, we have $s_0^-=-\infty$, $s_0^+=+\infty$,
thus $\Delta$ is a cube.
Suppose that %$\Sigma$ contains two components, say $L_1$ and $L_2$.
a component $K\subset \R X\setminus \R C$ is adjacent to $L_1$ and $L_2$.
Considering the inverse image of the line $\{(0,u,0,\dots,0)\ |\ u\in\R\}\subset\R^g$
under \eqref{toDelta} we get a contradiction at the value $u=\pm \ar(K)$ as in 
the proof of Proposition \ref{nest}.
\end{proof}

\begin{prop}
If a minimal simple Harnack curve $C$ does not have an $m$-component,
but has a non-contractible component $L$ contained in the component 
$N\subset\R X$, then $N$ contain another non-contractible component $L'\subset\R C$
homologous to $L$.
Furthermore, in this case we have $\R C\cap N=L\cup L'$.
\end{prop}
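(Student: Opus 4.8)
The plan is to combine the local constraints of Definition~\ref{dH} with the global picture of $\widetilde{\R\MM^i_C}$ provided by the diffeomorphism $I_{\R C}\colon\widetilde{\R\MM^i_C}\to\Delta$ of Proposition~\ref{prop-diffeo} (available because $C$ is minimal), degenerating the curve along preimages of lines in $\Delta$ exactly as in the proofs of Propositions~\ref{nest} and~\ref{single}. First I would record two preliminary facts. Since $C$ has no $m$-component and $L=L_0$ is non-contractible, $L_0$ is not an $m$-component, so by the very definition of an $m$-component the non-spherical component $N$ contains at least one further component of $\R C$; thus $\R C\cap N$ has at least two components. Moreover the two local sides of $L_0$ lie in two \emph{distinct} components $K^+,K^-$ of $N\setminus\R C$: since $[L_0]\neq 0$ in $H_1(\R X;\Z_2)=H_1(N;\Z_2)$ the oval $L_0$ does not separate $N$, so if both sides lay in one component $K$ then either $\overline K=N$ and $\R C\cap N=\{L_0\}$ (making $L_0$ an $m$-component), or $\overline K\subsetneq N$ has a boundary component $L''\neq L_0$, in which case the two copies of $L_0$ in $\partial\overline K$ acquire opposite boundary orientations from the orientation of $K$ induced by $\Omega$, so no orientation of $K$ restricts to the complex orientation on both, contradicting Definition~\ref{dH}. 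The same reasoning applies to every non-contractible component of $\R C$ contained in $N$.

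Next I would show that $\R C\cap N$ contains no contractible component. Suppose $L_c\subset N$ were contractible; choose it innermost, bounding a disk region $D_c$ that contains no component of $\R C$. The region $K$ on the other side of $L_c$ must have a boundary component $L_d\neq L_c$, for otherwise $\R C\cap N=\{L_c\}$, contradicting $L_0\in\R C\cap N$. Using the diffeomorphism of Proposition~\ref{prop-diffeo}, move along the preimage of the line in $\Delta$ that decreases $\ar(K)$ to $0$; this motion stays inside $\Delta$ because $\ar(K)$ is strictly smaller than the finite half-area $s_c^\pm$ bounding the $x_c$-coordinate (the corresponding side of $L_c$ properly contains $\overline K$). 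In the limit the ovals of $C_t$ corresponding to $L_c$ and $L_d$ meet at a point $p$; by Proposition~\ref{hK3} together with minimality and Corollary~\ref{cK3}, $p$ must be a solitary node. But the oval of the limit curve through $p$ is the merger of these two, with homology class $[L_c]+[L_d]=[L_d]$; here $L_d$ is non-contractible (a second contractible oval in $N$ would be nested with $L_c$, and the merger would then bound the disk $\overline D_c$ of fixed area, hence again be unshrinkable), so the merged oval is non-contractible and cannot contract to a point — a contradiction. Hence every component of $\R C$ in $N$ is non-contractible.

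Now I would pin down the configuration. Collapsing $\ar(K^+)$ to $0$ along a preimage of a line in $\Delta$ — a motion that remains inside $\Delta$ since the coordinate being varied is unconstrained ($s_0^\pm=\pm\infty$) — must, by minimality and Corollary~\ref{cK3}, produce a solitary node. Reading off what this forces: $\overline{K^+}$ becomes a disk once its boundary circles are glued at the node, so $\overline{K^+}$ is an annulus whose two boundary components are $L_0$ and a single oval $L'$, and the merger of the corresponding ovals of the limit curve is contractible, i.e.\ $[L']=[L_0]$; an extra boundary component of $\overline{K^+}$, or positive genus of $\overline{K^+}$, would instead yield a non-solitary or non-nodal singularity. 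Doing the same with $K^-$ gives an annulus between $L_0$ and an oval $L''$ with $[L'']=[L_0]$. Then $\overline{K^+}\cup L_0\cup\overline{K^-}$ is an annulus with boundary $L'\cup L''$, with complement a compact surface $R$, $\partial R=L'\cup L''$. If $\R C$ met the interior of $R$, or if $R$ had positive genus (so $L'\neq L''$), one repeats the collapse argument on an innermost oval of $\R C$ in $R$, or directly on $R$, and the genus or the extra boundary again obstructs a solitary node, contradicting minimality; hence $R$ is an annulus, $L'=L''$, and $\R C\cap N=L_0\cup L'$ with $L'$ non-contractible and homologous to $L_0$.

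The main obstacle is this last step: turning the soft statement ``minimality forces the limit curve to be reduced, irreducible and singular only at solitary nodes'' into the rigid combinatorial conclusion that a collapsed region must be an annulus bounded by two homologous ovals, and in particular that $N$ carries exactly two ovals of $\R C$. Making this precise requires tracking the vanishing cycles of the degeneration as in the proof of Proposition~\ref{hK3}, controlling how the ovals of $\R C$ recombine in the limit, and verifying in each potential configuration that the line in $\Delta$ used to collapse the region actually lies in the image of $I_{\R C}$.
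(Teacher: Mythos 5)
Your overall framework---Definition~\ref{dH} for the orientation constraints together with the diffeomorphism $I_{\R C}\colon\widetilde{\R\MM^i_C}\to\Delta$ of Proposition~\ref{prop-diffeo} and area bookkeeping along lines in $\Delta$---is exactly the paper's. Your opening observations (a second component of $\R C$ in $N$ exists because $L_0$ is not an $m$-component; the two local sides of $L_0$ lie in distinct regions $K^+,K^-$) are correct, and your degeneration argument excluding contractible ovals in $N$ is a workable variant of what the paper does; the paper instead gets the existence of a non-contractible $L'$ directly from Definition~\ref{dH} and folds all ``extra component'' cases, contractible or not, into one ray argument.

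The genuine gap is in your final step, the one you flag yourself. First, ``collapsing $\ar(K^+)$ to $0$'' is not automatically a line in $\Delta$: $\ar(K^+)$ is an affine function of $(x_1,\dots,x_g)$ whose coefficients are the signed incidences of the ovals with $\partial \overline{K^+}$ (with $I_{L_0}=-\sum_j x_j$), and these contributions can cancel along every unconstrained direction. For instance, if three non-contractible ovals of $\R C$ in $N$ all bound both $K^+$ and $K^-$ with coherent orientations (a configuration your earlier steps do not exclude), then $\ar(K^{\pm})$ is constant on the whole unconstrained subspace and can only be changed by moving the spherical coordinates, which are confined to bounded intervals; the asserted motion to $\ar(K^+)=0$ then need not exist inside $\Delta$. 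Second, even where the collapse is realizable, the ``reading off'' is backwards: shrinking a region between two \emph{distinct} ovals to zero area brings them together at a real point with two real branches, which is never a solitary node; by minimality, Corollary~\ref{cK3} and Proposition~\ref{hK3} such a limit is impossible outright, so the collapse tells you nothing about whether $\overline{K^+}$ was an annulus, a pair of pants, or worse---it would equally ``forbid'' the legitimate two-oval configuration. The paper avoids both problems by never collapsing a region: to exclude a third oval $L''\subset N$ it takes the ray $\{(u,0,\dots,0)\ |\ u\ge0\}$, along which $L_0$ and $L'$ sweep unbounded area while $L''$ stays put, and derives the contradiction from that as in Propositions~\ref{nest} and~\ref{single}; and once only $L$ and $L'$ remain on $N$, their being homologous follows from Definition~\ref{dH} alone (a connected complementary region carrying two boundary copies of the same oval cannot be coherently oriented), with no need for your annulus analysis.
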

\begin{proof}
If $(N\cap\R C)\setminus L$ consists of contractible components, then we have a
contradiction with  Definition \ref{dH}. Thus, there must be another non-contractible component 
$L'\subset \R C$ on $N$.
%\mnote{Why not several components
%whose union is homologous to $L$?} 
Without loss of generality, we may assume that $L=L_0$,
$L'=L_1$.
If there exists yet another component $L''\subset\R C$
on $N$ (contractible or not), then considering the inverse image of the
ray $\{(u,0,\dots,0)\ |\ u\ge 0\}\subset\R^g$ under \eqref{toDelta}
we get a contradiction as in the proofs of Propositions \ref{nest} and \ref{single}.
If $L$ and $L'$ are the only components on $N$, then they must be homologous
by Definition \ref{dH}.
\end{proof}

The following example shows that a simple Harnack curve can have two homologous non-contractible components. 

\begin{exa}\label{exa2hom}
Recall that a real K3-surface $X$ polarized by genus 2 admits
a double covering $\pi:X\to\pp^2$ branching along a real curve
$B\subset\pp^2$ of degree 6. Denote with $\rho:X\to X$ the involution of
deck transformation of $\pi$.
Since $\pi$ is a real map, 
the holomorphic involutions $\rho$ and the anti-holomorphic involution $\sigma$ 
commute. Therefore, the composition $\rho\circ\sigma$ is an antiholomorphic
involution on $X$. Denote with $\R X'$ its fixed locus. 

\begin{figure}[h]
\vspace{-10pt}
\includegraphics[height=75mm]{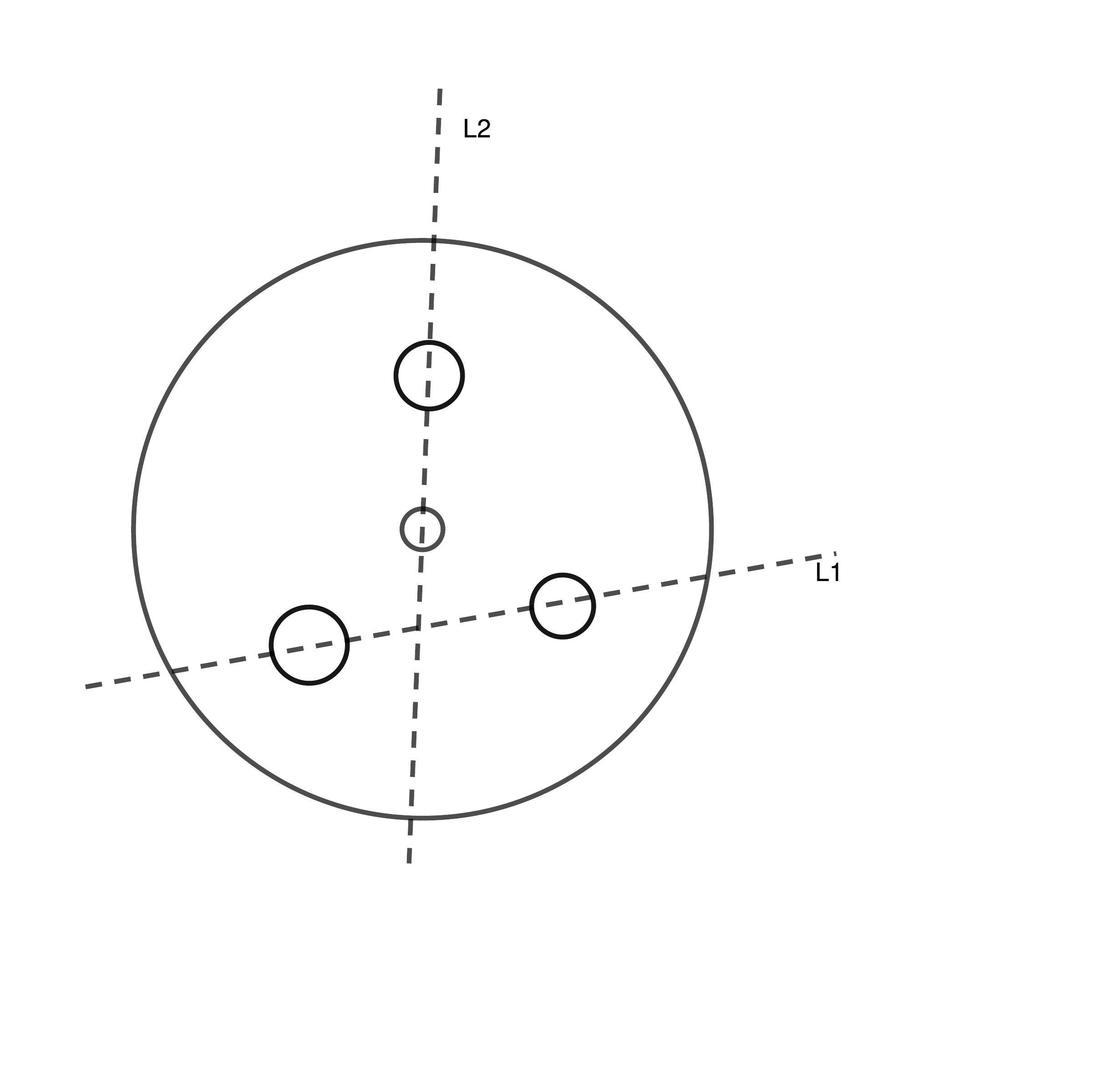}\vspace{-50pt}
\caption{Two hyperplane sections in a genus 2 polarized K3-surface. 
\label{C6}}
\end{figure}
Let $X$ be the K3-surface obtained as the double covering $\pi:X\to\pp^2$
branched along a real sextic $B \subset\pp^2$ whose real locus
is depicted at Figure \ref{C6}
(more precisely, the figure shows the isotopy type of the real locus and the position of ovals of the curve
with respect to two auxiliary straight lines). 
The involution of complex conjugation on 
%$\cp^2$
$\pp^2$ 
can be lifted to $X$ in two ways differing by the deck transformation $\rho$.
We may assume that $\R X$ covers the non-orientable half of $\rp^2\setminus\R B$.
Then, $\R X$ is homeomorphic to the disjoint union of a torus and four spheres, 
while $\R X'$ is homeomorphic to a surface of genus 4. 

The equation $z^2=f(x_0,x_1,x_2)$, where $f$ is a homogeneous
polynomial defining the curve $B$, gives an embedding of 
$X$ into the weighted projective space $\pp(1,1,1,3)$.
Accordingly, the inverse image $A_L=\pi^{-1}(L)$ of a real line $L\subset\pp^2$
sits in the weighted projective plane $\pp(1,1,3)$ embedded in $\pp(1,1,1,3)$.
Suppose that $L$ intersects $B$ in 6 distinct real points. 
Then, 
$\R A_L=A_L\cap\R X$ is an M-curve.
Its three ovals in the real part of $\pp(1,1,3)$ have alternate complex orientations
(with respect to the pencil of lines passing through the singular point of $\pp(1,1,3)$) 
by Fiedler's orientation alternation, cf. Remark \ref{rem-alt}. 
Furthermore, $\R A_L'=A_L\cap\R X'$ is also an M-curve,
while complex orientations of $\R A_L$ and $\R A_L'$ agree over
any point of $B\cap A_L$ after multiplication by $i$.

Consider the lines $L_1$ and $L_2$ from Figure \ref{C6}. %Its real part 
%The inverse image $A_L=\pi^{-1}(L)$ of a real line $L\subset\pp^2$ transverse
%to $B$ is a smooth curve of genus 2 (from the polarization of $X$).
%If $\R L\cap\R B$ consists of 6 points then $\R A_L=\R X\cap A_L$ 
%consists of 3 ovals, so that $A_L$ is an M-curve.
%The curve $A_L$ ...
%Similarly,
%$\R A_L'=\R X'\cap A_L$ is an M-curve. 
The intersection $A_{L_1}\cap A_{L_2}$ consists of two points from $\R X'$. 
The complement $(A_{L_1} \cup A_{L_2})\setminus (\R A_{L_1} \cup \R A_{L_2})$ 
consists of two connected components each obtained as a bouquet of
two planar domains.
Accordingly, the real curve $C$ obtained by a small perturbation of
$A_{L_1} \cup A_{L_2}$ is an M-curve. Its real part $\R C=C\cap\R X$ 
has two non-contractible ovals at the torus component of $\R X$
and an oval at each spherical component of $\R X$.
The complex orientation of $\R C$ is determined by the complex
orientations of $\R A_{L_j}$, $j=1,2$, chosen so that the corresponding
orientations at $\R A'_{L_j}$ are associated to the intersecting halves 
of $A_{L_j} \setminus \R A_{L_j}$. Thus, the complex orientations of the non-contractible
ovals of $\R C$ are opposite, and $C$ is a simple Harnack curve. 

For other polarizations of $X$, existence of 
simple Harnack curves with two homologous non-contractible components 
remains unknown to us. 
\end{exa} 

%Below we formulate 
%a few 
In the end, we formulate some open (to the best of our knowledge) questions 
%on existence of other 
concerning simple Harnack curves. 

\begin{que}
Does there exist a non-minimal simple Harnack curve? %\mnote{It: with the new definition,
%we can, probably, construct an example?}  
%In other words, can a simple Harnack curve degenerate to a non-reduced curve
%without contraction of one of its ovals? 
\end{que} 

\begin{que}
Does there exist a K3-surface containing
%a simple Harnack curve of arbitrary large genus on a (closed) K3-surface? 
simple Harnack curves of arbitrary large genus? 
\end{que} 

\begin{que}
%Does any non-empty real projective 
Which K3-surfaces contain simple Harnack curves? 
\end{que}

%\begin{que}\label{2hom}
%Does there exist a simple Harnack curve with two homologous non-contractible components?
%\end{que}
%
%\begin{rmk}
%After finishing the first version of the paper we have found that the answer to 
%question \ref{2hom} is positive, see Example \ref{exa2hom},
%at least in the case when $X$ is genus 2 polarized.
%Existence of such curves for other polarization of $X$ remains unknown to us.
%\end{rmk}
% 
 
%Since $\lim\limits_{t\to 0} I_{\R C}(C_t)\in\Delta$ no oval of 
%$\R C$ could shrink, in particular,
%$D$ cannot have solitary nodes. Also $D$ cannot
%have imaginary singularities since $C_t$ is an M-curve.
%As in the proof of Proposition \ref{hK3} any real singularity of 

\bibliography{b}
\bibliographystyle{plain}

\end{document}